\documentclass[10pt]{amsart}
\usepackage[T1]{fontenc}
\usepackage[utf8]{inputenc}
\usepackage[margin=1in]{geometry}
\usepackage{amsmath,amssymb,amsthm,mathtools,booktabs,microtype}
\usepackage[hidelinks]{hyperref}
\newtheorem{theorem}{Theorem}[section]
\newtheorem{lemma}[theorem]{Lemma}
\newtheorem{proposition}[theorem]{Proposition}
\newtheorem{corollary}[theorem]{Corollary}
\theoremstyle{remark}
\newtheorem{remark}[theorem]{Remark}
\newcommand{\R}{\mathbb R}
\newcommand{\Z}{\mathbb Z}

\newcommand{\DotSet}{\Lambda}
\title[Dot products and separate growth-minimizing bases]
{A dot-product bound from separate growth-minimizing bases}
\author{Zhipeng Lu}
\address{Shenzhen MSU--BIT University and Guangdong Laboratory of
Machine Perception and Intelligent Computing, Shenzhen 518172,
Guangdong, China}
\email{zhipeng.lu@hotmail.com}
\subjclass[2020]{52C10, 11B30}
\keywords{dot products, sum--product estimates, Pl\"unnecke--Ruzsa
inequalities, Elekes--Szab\'o theorem}
\date{}
\begin{document}
\begin{abstract}
For every finite $P\subset\R^2$ we prove
$|\{p\cdot q:p,q\in P\}|\gg |P|^{199/295}$, with an absolute constant
and no logarithmic loss, where $199/295=2/3+7/885$. This improves the
bound $2/3+7/1425$ of Kokkinos, which in turn had improved the first
superthreshold bound of Hanson, Roche-Newton, and Senger. The main new
ingredient is the inequality
$|F^{(2)}G^{(2)}/(F^{(2)}G)|
\le |AB|(|AF|/|A|)^4(|BG|/|B|)^3$:
each of the two radial profiles retains its own Petridis
growth-minimizing subset, and the product of the two subsets serves as
a common base for both growth operators. A prime-power construction
shows this inequality is sharp under its hypotheses. The second
ingredient is a weighted-median replacement for the dyadic selection in
the squeezing argument of Roche-Newton and Wong, which upgrades their
seven-factor expander to a logarithm-free form; an appendix gives the
complete proof from the Solymosi--Zahl incidence theorem. We complement
the lower bound with a structural third-moment estimate for
near-extremal configurations and with a construction showing that,
in contrast, the \emph{pinned} problem admits no linear lower bound:
for every $\varepsilon>0$ there are arbitrarily large $P$ with
$\max_{p\in P}|p\cdot P|\le\varepsilon|P|$, and indeed
$\max_{p}|p\cdot P|=O(|P|/\sqrt{\log|P|})$ is attainable.
\end{abstract}
\maketitle

\section{Introduction}

For a finite set $P\subset\R^2$, put
\[
 \DotSet(P)=\{p\cdot q:p,q\in P\},\qquad N=|P|,\qquad t=|\DotSet(P)|.
\]
How small can $t$ be? The question is a natural companion of the
Erd\H{o}s distinct-distance problem, resolved up to logarithms by Guth
and Katz \cite{GK}, but it behaves differently: a geometric progression
on a single ray already achieves $t=2N-1$, so the extremal behavior is
governed by configurations spread over many directions. The
Szemer\'edi--Trotter theorem \cite{ST} gives $t\gg N^{2/3}$
(see \cite{HRS}), and this threshold stood until Hanson, Roche-Newton,
and Senger \cite{HRS} proved $t\gtrsim N^{2/3+c}$ for an unspecified
absolute $c>0$, by introducing a superquadratic expander for products
of shifts. Kokkinos \cite{Kok} made the gain explicit, proving
$t\gtrsim N^{2/3+7/1425}$ using the stronger expander of Roche-Newton
and Wong \cite{RNW}. Here $X\gtrsim Y$ allows a factor
$(\log(2Y))^{-C}$, while $X\gg Y$ means $X\ge cY$ with $c>0$ absolute.

\begin{theorem}\label{thm:main}
For every finite $P\subset\R^2$ with $|P|\ge2$,
\begin{equation}\label{eq:main}
 |\DotSet(P)|\gg |P|^{199/295}
     =|P|^{2/3+7/885}.
\end{equation}
\end{theorem}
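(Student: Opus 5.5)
The plan follows the Hanson--Roche-Newton--Senger / Kokkinos framework, with the two new ingredients announced in the abstract inserted at the two places where those papers lose.

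\emph{Step 1: reduce to a rich pencil of lines.} Write $t=|\DotSet(P)|$ and suppose $t\le N^{2/3+c}$. For $p\in P\setminus\{0\}$, the set $p\cdot P$ takes at most $t$ values, so $P$ is covered by at most $t$ lines perpendicular to $p$. By Szemer\'edi--Trotter, for a positive proportion of $p$ most of $P$ lies on lines through the origin carrying many points. Dyadic pigeonholing on the number of points per line, done in the weighted-median form so that no logarithm is lost, then yields:
\begin{itemize}
\item a set $L$ of lines through the origin, of size comparable to $N/k$;
\item about $k$ points of $P$ on each line of $L$.
\end{itemize}
On a line with direction $u$, write the points as $a u$ with $a\in A_u$. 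Dot products then factor as $(a u)\cdot(b v)=ab\,(u\cdot v)$. So for a pair of directions $u,v$, the set $A_u B_v (u\cdot v)$ lies in $\DotSet(P)$.

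\emph{Step 2: build a product-of-shifts configuration.} Fix two well-chosen directions $u,v$ whose radial profiles $A=A_u$ and $B=A_v$ give $|AB|\le t$. Collect the ratios arising from the other directions into sets $F$ and $G$, in such a way that $AF$ and $BG$ are contained in (rescaled copies of) $\DotSet(P)$. This gives $|AF|,|BG|\le t$, and Step 1 gives $|F|,|G|$ large.

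Next, apply the new inequality stated in the abstract,
\[
 \bigl|F^{(2)}G^{(2)}/(F^{(2)}G)\bigr|\le |AB|\,(|AF|/|A|)^4\,(|BG|/|B|)^3 .
\]
This is proved via Petridis growth-minimizing subsets $A'\subset A$ and $B'\subset B$ chosen separately, with $A'B'$ used as a common base for both Pl\"unnecke--Ruzsa iterations.

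\emph{Step 3: apply the expander.} The left-hand side is a product-of-shifts set of the form treated by the logarithm-free seven-factor Roche-Newton--Wong expander, proved in the appendix from Solymosi--Zahl. That expander bounds it from below by $|F|^{a}|G|^{b}$, with the explicit superquadratic exponents of \cite{RNW}.

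Combining this lower bound with the upper bound from Step 2, and substituting $|A|,|B|\approx k$, $|F|,|G|\gtrsim N/k$ and $|AB|,|AF|,|BG|\le t$, gives an inequality of the form $t^{8}\gg k^{\alpha}(N/k)^{\beta}$. Together with the trivial bound $t\gg k^2$, or $t\gg N^{2/3}$-type constraints in the complementary ranges of $k$, optimizing over $k$ yields the exponent $199/295$.

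\emph{Main obstacle.} The hard step is Step 2. Everything hinges on arranging that all the relevant sets are genuinely subsets of $\DotSet(P)$ up to scaling:
\begin{itemize}
\item $F^{(2)}=F\cdot F$-type iterates and the quotient by $F^{(2)}G$ must be controlled;
\item the pigeonholing must keep $|F|$ and $|G|$ large with only constant losses.
\end{itemize}
The exponent bookkeeping (the 4 versus 3 powers) must also match the expander's exponents exactly. I would first do the optimization formally with the exponents of \cite{RNW} to confirm that $7/885$ appears. Then I would verify that the median-based selection preserves the constants without logarithmic losses.
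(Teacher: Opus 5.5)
Your outline names the right ingredients: radial profiles, $(au)\cdot(bv)=ab\,(u\cdot v)$, Corollary~\ref{cor:bridge}, and Theorem~\ref{thm:expander}. But the step you flag as the main obstacle is exactly the missing idea, and as sketched it does not go through.

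\textbf{The missing construction.} Theorem~\ref{thm:expander} applies only to $F=1+sS$, $G=1+s'S$ for one common slope set $S$. ``Collecting ratios from the other directions'' does not produce this. A point $c(1,w)$ gives $(a,sa)\cdot c(1,w)=ac(1+sw)$, so all chosen points on the other rays must carry the \emph{same} multiplier $c$, i.e.\ lie on one line $x=x_0$.

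The paper gets this from a single occupied projection fiber (Lemma~\ref{lem:slice}):
\begin{enumerate}
\item Rotate an actual point of $P$ onto the positive horizontal axis. Horizontal coordinates are then rescaled dot products with that pin, so they take at most $t$ values.
\item Rays with at least $m_0=N/(2D)$ points carry at least $N/2$ points. Hence some vertical line contains at least $N/(2t)$ of them, one per ray.
\item After dilating to $x=1$, these are the points $(1,s)$, $s\in S$.
\end{enumerate}
Then $A_sF\subseteq\DotSet(P)$ and $A_{s'}G\subseteq\DotSet(P)$ via $(a,sa)\cdot(1,u)=a(1+su)$. Also $|A_sA_{s'}|\le t$ for every pair, since $(a,sa)\cdot(b,s'b)=ab(1+ss')$.

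This also answers your worry about the iterates. Only these three sets must lie in $\DotSet(P)$; the iterates $F^{(2)}$ and the quotient are handled entirely by Corollary~\ref{cor:bridge}. Moreover, $s,s'$ are not ``well-chosen directions'' fixed beforehand. They are whatever Theorem~\ref{thm:expander} returns for this $S$, and its lower bound is $|S|^{31/12}$, not $|F|^a|G|^b$.

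\textbf{The bookkeeping.} Your parameters inherit the problem.
\begin{itemize}
\item The fiber gives $|S|\ge N/(2t)$, not $|F|,|G|\gtrsim N/k\approx D$. With your sizes, the same arithmetic would yield $t\gg N^{137/202}=N^{2/3+7/606}$. That is stronger than the theorem, a sign that shift sets of that size are not available.
\item The bound $t\gg k^2$ is false: a geometric progression on one ray has $t=2k-1$. The occupancy bound actually used is $m_0=N/(2D)\gg N^2/t^2$, from $t\gg\sqrt{ND}$ (Lemma~\ref{lem:direction}).
\item That is the role of Szemer\'edi--Trotter here: incidences of $P$ with the at most $Dt$ level lines of one pin per ray. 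It is not the ``rich pencil'' statement in your Step 1.
\item No median or dyadic selection is needed on the geometric side, because only a lower bound on the full radial sets is used. The weighted median belongs inside the expander's squeezing step (Lemma~\ref{lem:median}).
\end{itemize}
With these inputs, $(N/t)^{31/12}\ll|S|^{31/12}\ll t^8/m_0^7\ll t^{22}/N^{14}$ gives $N^{199/12}\ll t^{295/12}$ directly, with no optimization over $k$.
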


Self-pairs are included in $\DotSet(P)$; Section \ref{sec:distinct}
gives the same exponent for distinct pairs. Two independent
improvements over \cite{Kok} enter, one in the exponent and one in the
logarithms.

\smallskip\noindent\textbf{The exponent.}\enspace
All arguments since \cite{HRS} feed two radial profiles $A,B$ and two
shifted slope sets $F,G$, tied by three product budgets
$|AF|,|BG|,|AB|\le t$, into a seven-factor expander. The existing
route passes both profiles through a common dilated intersection of
guaranteed size $|A||B|/|AB|$; with profile sizes at least $m$, this
costs five extra factors of $t/m$ (Remark \ref{rem:saving}). We avoid the intersection entirely:
by Petridis's minimal-growth lemma, each profile contains its own
subset minimizing expansion by its own shift set, and, because the
lemma holds for an arbitrary auxiliary set, the \emph{product} of the
two minimizers is a common base for both growth operators. This yields
the key inequality
\begin{equation*}
 \left|\frac{F^{(2)}G^{(2)}}{F^{(2)}G}\right|
 \le |AB|
      \left(\frac{|AF|}{|A|}\right)^4
      \left(\frac{|BG|}{|B|}\right)^3,
\end{equation*}
proved as Corollary \ref{cor:bridge}, and sharp under exactly these
hypotheses by a prime-power construction
(Proposition \ref{prop:sharp}). Already with the published expander of
\cite{RNW} this gives $2/3+7/885$; the improvement over $2/3+7/1425$
is entirely in this step.

\smallskip\noindent\textbf{The logarithms.}\enspace
The expander of \cite{RNW}, like its predecessor in \cite{HRS},
carries logarithmic losses from a dyadic pigeonholing in its squeezing
step. We replace that selection by a weighted median
(Lemma \ref{lem:median}), which retains half the indices while
controlling the gap palette exactly, and we verify the associated
algebraic surface directly in quadratic coordinates
(Lemma \ref{lem:quadric}). The result is a logarithm-free form of
their theorem.

\begin{theorem}[Logarithm-free RNW expander]\label{thm:expander}
For every finite nonempty $S\subset\R$ with $1+SS\subset\R_{>0}$,
there exist $s,s'\in S$ such that
\begin{equation}\label{eq:expander}
 \left|
 \frac{(1+sS)^{(2)}(1+s'S)^{(2)}}
      {(1+sS)^{(2)}(1+s'S)}
 \right|\gg |S|^{31/12}.
\end{equation}
\end{theorem}

Here $F^{(j)}$ denotes the $j$-fold product set, with independent
choices of the factors, $F^{(0)}=\{1\}$, and quotient sets likewise
have independent numerator and denominator choices. All displayed
factors are positive, so no zero denominators occur. With $\gtrsim$ in
place of $\gg$, Theorem \ref{thm:expander} is a special case of
\cite[Theorem 1]{RNW} (Theorem 1.1 of its arXiv version), which is
also the input of \cite[Theorem 4.1]{Kok}; the exponent $31/12$
improved the exponent $5/2$ of \cite{HRS}. Appendix
\ref{app:expander} proves the stronger form stated here; its only
incidence input is the Solymosi--Zahl theorem \cite[Theorem 1.4]{SZ},
whose constant depends only on the degree.

Section \ref{sec:profiles} adds structural information: a third-moment
bound on the ray occupancies which shows that any configuration within
a constant of the exponent $199/295$ must use its directions, its
occupancies, and every pinned palette at full strength. Section
\ref{sec:pinned} shows by construction that this saturation is not
automatic but extremal: the pinned quantity
$\max_{p\in P}|p\cdot P|$ admits no linear lower bound, and can be as
small as $O(N/\sqrt{\log N})$, while known bounds keep it
$\gg N^{2/3}$. To the best of our knowledge, \cite{Kok} was the
strongest previously recorded global bound, and no sublinear pinned
construction was known.\footnote{Propositions
\ref{prop:multibase} and \ref{prop:pinned}, Corollary \ref{cor:bridge},
and the identities of Lemmas \ref{lem:median} and \ref{lem:quadric}
have been formally verified in Lean~4 over \texttt{mathlib}; the
verification files accompany this article as ancillary files.}

\section{Separate minimizing subsets can share a product base}

All sets in this section are finite nonempty subsets of a commutative
group, written multiplicatively. We give the needed growth argument
in full.

\begin{lemma}[Ruzsa quotient inequality]\label{lem:ruzsa}
For finite nonempty $C,U,V$,
\[
 |U/V|\,|C|\le |CU|\,|CV|.
\]
\end{lemma}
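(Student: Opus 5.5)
The plan is the standard Ruzsa injection argument. We construct an injection
\[
 \phi\colon C\times (U/V)\longrightarrow CU\times CV
\]
and read off $|C|\,|U/V|\le |CU|\,|CV|$. Because $U/V$ is the set of distinct quotients, each element $x\in U/V$ may have several representations $x=u/v$. First, for every $x\in U/V$ we fix once and for all one representation $x=u_x/v_x$ with $u_x\in U$ and $v_x\in V$. The map is then
\[
 \phi(c,x)=(c\,u_x,\;c\,v_x).
\]
Here $cu_x\in CU$ and $cv_x\in CV$, so $\phi$ lands in the stated product set.

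Next we check injectivity. From the image pair $(y,z)=(cu_x,cv_x)$, commutativity gives
\[
 y/z=(cu_x)/(cv_x)=u_x/v_x=x,
\]
so $x$ is determined by the image. Once $x$ is known, its fixed representative $u_x$ is known, and then $c=y/u_x$ is determined as well. Hence $\phi$ is injective, and comparing cardinalities gives the inequality.

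The only point needing care is the choice of a fixed representative for each quotient. Choosing freely for each pair $(c,x)$ could break injectivity, since the recovery step uses that $u_x$ depends on $x$ alone. Finiteness and nonemptiness make all the sets involved well defined. No other obstacle is expected, and commutativity is used exactly once, in the cancellation of $c$ inside $y/z$.
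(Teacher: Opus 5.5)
Your proof is correct and matches the paper's argument: you fix one representation $x=u_x/v_x$ for each quotient, map $(c,x)\mapsto(cu_x,cv_x)$, recover $x$ as the quotient of the two coordinates, and then recover $c$ from the first coordinate.
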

\begin{proof}
Choose one representation $w=u_w/v_w$ for each $w\in U/V$.
The map $(w,c)\mapsto(cu_w,cv_w)$ is injective into $CU\times CV$:
the quotient recovers $w$, and then the first coordinate recovers $c$.
\end{proof}

\begin{lemma}[Petridis's minimal-growth lemma]\label{lem:petridis}
Let $X,F$ be finite nonempty sets, and put $\alpha=|XF|/|X|$.
Suppose
\[
 |ZF|\ge\alpha|Z|\qquad\text{for every }Z\subseteq X.
\]
Then, for every finite nonempty $C$,
\begin{equation}\label{eq:petridis}
 |XFC|\le\alpha|XC|.
\end{equation}
\end{lemma}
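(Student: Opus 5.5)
We prove \eqref{eq:petridis} by induction on $|C|$, following Petridis's original argument. Note that the hypothesis with $Z=X$ is an equality, so $X$ itself is a minimizer of $|ZF|/|Z|$ over nonempty $Z\subseteq X$. For $|C|=1$, say $C=\{c\}$, we have $|XFc|=|XF|=\alpha|X|=\alpha|Xc|$, since translation is a bijection in a group. This gives the base case.

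For the inductive step, suppose the bound holds for $C$, and let $C'=C\cup\{c\}$ with $c\notin C$. We compare the new elements of $XFC'$ with those of $XC'$. Put
\[
Z=\{x\in X: xc\in XC\}\subseteq X .
\]
We claim that $ZFc\subseteq XFC$. Indeed, if $x\in Z$ then $xc=x'c''$ for some $x'\in X$ and $c''\in C$, and hence $xfc=x'fc''\in XFC$. Consequently
\[
XFC'=XFC\cup\bigl(XFc\setminus ZFc\bigr),
\]
which gives
\[
|XFC'|\le |XFC|+|XF|-|ZF| .
\]
Here we used $ZF\subseteq XF$ and the fact that translation by $c$ preserves cardinalities.

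On the other side, $XC'=XC\sqcup (Xc\setminus Zc)$, and this union is disjoint by the definition of $Z$. Hence
\[
|XC'|=|XC|+|X|-|Z| .
\]

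We now bound $|XF|-|ZF|$ in terms of $|X|-|Z|$. If $Z$ is nonempty, the hypothesis gives $|ZF|\ge\alpha|Z|$. If $Z=\emptyset$, then $|ZF|=0=\alpha|Z|$ trivially. In either case, $|XF|-|ZF|\le\alpha|X|-\alpha|Z|$. Combining this with the induction hypothesis $|XFC|\le\alpha|XC|$ yields
\[
|XFC'|\le\alpha|XC|+\alpha(|X|-|Z|)=\alpha|XC'|,
\]
which completes the induction.

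The only delicate point is the choice of $Z$ and the inclusion $ZFc\subseteq XFC$. This inclusion is where commutativity is used, since we rewrite $xfc$ as $x'fc''$ using $xc=x'c''$. The rest of the argument is bookkeeping with disjoint unions.
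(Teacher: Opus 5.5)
Your proof is correct and is essentially the paper's argument. Your set $Z=\{x\in X: xc\in XC\}$ is the paper's $X\setminus X_i$ for $c=c_i$ in an enumeration of $C$, and your induction on $|C|$ is the paper's summation over $i$ of the bounds $|XF|-|(X\setminus X_i)F|\le\alpha|X_i|$.
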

\begin{proof}
Write $C=\{c_1,\ldots,c_r\}$ and put
\[
 X_i=\{x\in X:c_i x\notin\bigcup_{j<i}c_jX\}.
\]
The sets $c_iX_i$ partition $CX$, so $|CX|=\sum_i|X_i|$.
Every element of $c_i(X\setminus X_i)$ is in
$\bigcup_{j<i}c_jX$. Multiplication by $F$ therefore shows that the
new contribution of $c_iXF$ to $\bigcup_{j\le i}c_jXF$ has size at most
\[
 |XF|-|(X\setminus X_i)F|
 \le\alpha|X|-\alpha|X\setminus X_i|
 =\alpha|X_i|.
\]
Summing proves \eqref{eq:petridis}.
This is the argument of \cite[Proposition 2.1]{Pet}.
\end{proof}

\begin{proposition}[Several growth bases]\label{prop:multibase}
Let $A_1,\ldots,A_r$ and $F_1,\ldots,F_r$ be finite nonempty sets.
For arbitrary nonnegative integers $p_i,q_i$,
\begin{equation}\label{eq:multibase}
 \left|\frac{\prod_{i=1}^r F_i^{(p_i)}}
                 {\prod_{i=1}^r F_i^{(q_i)}}\right|
 \le
 \left|\prod_{i=1}^r A_i\right|
 \prod_{i=1}^r
       \left(\frac{|A_iF_i|}{|A_i|}\right)^{p_i+q_i}.
\end{equation}
\end{proposition}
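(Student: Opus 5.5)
For each $i$ we first pass, via Lemma~\ref{lem:petridis}, to a growth-minimizing subset. Choose a nonempty $X_i\subseteq A_i$ minimizing $|X_iF_i|/|X_i|$, and put $\alpha_i=|X_iF_i|/|X_i|$. Then $\alpha_i\le |A_iF_i|/|A_i|$, and the hypothesis of Lemma~\ref{lem:petridis} holds for the pair $(X_i,F_i)$. Let $X=\prod_i X_i\subseteq\prod_i A_i$, so $|X|\le|\prod_i A_i|$. The goal becomes
\[
 \Bigl|\frac{\prod_i F_i^{(p_i)}}{\prod_i F_i^{(q_i)}}\Bigr|\le |X|\prod_i\alpha_i^{p_i+q_i},
\]
after which monotonicity gives \eqref{eq:multibase}.

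The key step is an iterated Plünnecke-type bound:
\[
 \Bigl|X\prod_i F_i^{(k_i)}\Bigr|\le\prod_i\alpha_i^{k_i}\,|X| \qquad\text{for all } k_i\ge0.
\]
We prove it by induction on $\sum k_i$, and the arbitrary auxiliary set $C$ in \eqref{eq:petridis} is what makes the product base work. Suppose $k_j\ge1$. Write $X=X_jY$ with $Y=\prod_{i\ne j}X_i$, and write the product of $F$'s as $F_j\cdot W$, where $W$ collects the remaining factors (one fewer copy of $F_j$). Apply Lemma~\ref{lem:petridis} to $(X_j,F_j)$ with $C=YW$:
\[
 |X_jF_jYW|\le\alpha_j|X_jYW|=\alpha_j|XW|.
\]
The induction hypothesis then bounds $|XW|$. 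Here we use commutativity to regroup freely. One point needs care: $X_jYW$ is exactly $XW$ as a set, and the lemma requires only that $C$ be finite and nonempty, which it is. The base case $\sum k_i=0$ is trivial.

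Finally, we apply Lemma~\ref{lem:ruzsa} with $C=X$, $U=\prod_i F_i^{(p_i)}$ and $V=\prod_i F_i^{(q_i)}$:
\[
 |U/V|\,|X|\le|XU|\,|XV|\le\prod_i\alpha_i^{p_i}|X|\cdot\prod_i\alpha_i^{q_i}|X|.
\]
Dividing by $|X|$ gives the reduced goal. The main obstacle is the bookkeeping in the induction: we must verify that at each step the remaining factors and the other bases $X_i$ can be absorbed into the auxiliary set $C$. This is legitimate because Lemma~\ref{lem:petridis} places no constraint on $C$. No step needs the minimizers to interact.
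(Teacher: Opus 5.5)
Your proof is correct and follows the paper's argument essentially verbatim. You take the Petridis minimizers $X_i$, use the product $X_1\cdots X_r$ as a common base (absorbing the other bases and the remaining factors into the auxiliary set $C$), and finish with the Ruzsa quotient inequality and $X_1\cdots X_r\subseteq A_1\cdots A_r$; your explicit induction on $\sum_i k_i$ is just a more careful write-up of the paper's ``repeated application of the lemma'' step.
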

\begin{proof}
For each $i$, choose a nonempty $X_i\subseteq A_i$ minimizing
$\alpha_i=|X_iF_i|/|X_i|$ over all nonempty subsets of $A_i$.
Then $\alpha_i\le |A_iF_i|/|A_i|$, and Lemma \ref{lem:petridis}
applies to $X_i,F_i$ with every auxiliary set $C$.
Put $C_0=X_1\cdots X_r$. Repeated application of the lemma gives
\begin{equation}\label{eq:jointgrowth}
 \left|C_0\prod_i F_i^{(k_i)}\right|
       \le |C_0|\prod_i\alpha_i^{k_i}
       \qquad(k_i\ge0).
\end{equation}
For example one can first remove all copies of $F_1$, using
$X_2\cdots X_r$ and the other factors as the auxiliary set, and then
proceed successively through the remaining $F_i$.

Apply Lemma \ref{lem:ruzsa} with the common set $C_0$, numerator
$U=\prod_iF_i^{(p_i)}$, and denominator
$V=\prod_iF_i^{(q_i)}$. Equation \eqref{eq:jointgrowth} gives
\[
 |U/V|\le |C_0|\prod_i\alpha_i^{p_i+q_i}.
\]
Finally $C_0\subseteq A_1\cdots A_r$.
No lower bound on the sizes of the minimizing subsets is needed.
\end{proof}

\begin{corollary}[The seven-factor estimate]\label{cor:bridge}
For finite nonempty $A,B,F,G$,
\begin{equation}\label{eq:bridge}
 \left|\frac{F^{(2)}G^{(2)}}{F^{(2)}G}\right|
 \le |AB|
      \left(\frac{|AF|}{|A|}\right)^4
      \left(\frac{|BG|}{|B|}\right)^3.
\end{equation}
In particular, if $|AF|,|BG|,|AB|\le t$ and $|A|,|B|\ge m$, then
\begin{equation}\label{eq:bridge-m}
 \left|\frac{F^{(2)}G^{(2)}}{F^{(2)}G}\right|
 \le \frac{t^8}{|A|^4|B|^3}
 \le \frac{t^8}{m^7}.
\end{equation}
\end{corollary}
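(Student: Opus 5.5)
The plan is to specialize Proposition \ref{prop:multibase} to $r=2$, taking $A_1=A$, $F_1=F$, $A_2=B$, $F_2=G$. For the numerator we set $p_1=2$, $p_2=2$, so that $U=F^{(2)}G^{(2)}$. For the denominator we set $q_1=2$, $q_2=1$, so that $V=F^{(2)}G$.

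The exponents in \eqref{eq:multibase} are then $p_1+q_1=4$ on $|AF|/|A|$ and $p_2+q_2=3$ on $|BG|/|B|$, while the leading factor is $|A_1A_2|=|AB|$. This is exactly \eqref{eq:bridge}. The only thing to check is the quotient convention. The proposition's quotient has independent numerator and denominator choices, and the factors inside $F^{(2)}$ and $G^{(2)}$ are chosen independently. That matches the convention fixed after Theorem \ref{thm:expander}, so the left sides agree literally.

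For \eqref{eq:bridge-m}, substitute the budgets $|AB|\le t$, $|AF|\le t$ and $|BG|\le t$ into \eqref{eq:bridge}. This gives the bound $t\cdot t^4|A|^{-4}\cdot t^3|B|^{-3}=t^8/(|A|^4|B|^3)$. Then use $|A|,|B|\ge m$ to bound this by $t^8/m^7$.

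There is no real obstacle here; the content is in Proposition \ref{prop:multibase}. The one point to state explicitly is that, in the ambient setting, all sets are nonempty subsets of the multiplicative group $\R_{>0}$ (or of $\R^\times$), so that the proposition applies.
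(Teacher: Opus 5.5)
Your proposal is correct and is exactly the paper's proof: apply Proposition~\ref{prop:multibase} with $r=2$, bases $(A_1,F_1)=(A,F)$ and $(A_2,F_2)=(B,G)$, and exponent pairs $(p_1,q_1)=(2,2)$, $(p_2,q_2)=(2,1)$, then substitute the three budgets and $|A|,|B|\ge m$ to get \eqref{eq:bridge-m}. Your remark about $\R_{>0}$ is harmless but unnecessary, since that section already works in an arbitrary commutative group.
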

\begin{proof}
Use Proposition \ref{prop:multibase} with two bases and exponent pairs
$(p_1,q_1)=(2,2)$ and $(p_2,q_2)=(2,1)$.
\end{proof}

\begin{remark}[Where the saving occurs]\label{rem:saving}
The common-intersection method chooses a dilate with
$|A\cap xB|\ge |A||B|/|AB|\ge m^2/t$ and then applies growth estimates
from that common subset; with the same budgets this gives
$\ll t^{13}/m^{12}$ for the seven-factor set, and, through the
geometry of Section \ref{sec:rays}, exactly the exponent $319/475$ of
\cite{Kok}. Equation \eqref{eq:bridge-m} instead gives $t^8/m^7$: the
product $X_1X_2$ of the two minimizing subsets is the common base, and
its upper bound $|X_1X_2|\le|AB|$ is all that is required.
\end{remark}

\begin{proposition}[Sharpness of the seven-factor estimate]
\label{prop:sharp}
For every $L\ge4$ there are finite sets $A,B,F,G\subset\mathbb Q_{>0}$
with $|A|=|B|=m=L^2$ and $|AF|,|BG|,|AB|\le t=4L^3$ such that
\[
 \left|\frac{F^{(2)}G^{(2)}}{F^{(2)}G}\right|
 =\Theta\bigl(m(t/m)^8\bigr).
\]
Thus the exponent $8$ in \eqref{eq:bridge-m} cannot be lowered using
only the stated cardinality budgets.
\end{proposition}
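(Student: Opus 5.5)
The plan is to realise the four sets inside the free abelian group generated by the primes, where a set of products is the same thing as a set of exponent vectors and cardinalities can be counted exactly. The upper bound $O(m(t/m)^8)$ comes for free from \eqref{eq:bridge-m}, since $t^8/m^7=m(t/m)^8$. So only the matching lower bound requires work. With $m=L^2$ and $t/m=4L$, the target is $\gg L^{10}$.

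A naive count explains the shape of the construction. If $F$ and $G$ each had only about $L$ independent "free" generators, the seven-factor set would have size about $L^4\cdot L^3=L^7$. To reach $L^{10}$, three more factors of $L$ must come from a base lattice shared with the profiles. The budget $|AB|\le t$ allows $A$ and $B$ to share one coordinate and differ in another.

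Fix distinct primes $p,q,u$ and two further disjoint families $r_1,\dots,r_L$ and $s_1,\dots,s_L$ of primes. Put
\[
A=\{p^iq^j:0\le i,j<L\},\qquad B=\{p^iu^j:0\le i,j<L\},
\]
so that $|A|=|B|=L^2=m$. Then $AB=\{p^iq^ju^k:0\le i<2L-1,\ 0\le j,k<L\}$ has at most $2L^3\le t$ elements. Next put
\[
F=\{p^iq^jr_k:0\le i,j<L,\ 1\le k\le L\},\qquad G=\{p^iu^js_k:0\le i,j<L,\ 1\le k\le L\}.
\]
Then $AF=\{p^iq^jr_k: i,j<2L-1\}$, so $|AF|\le 4L^3$, and likewise $|BG|\le4L^3$. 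This verifies the hypotheses with $t=4L^3$.

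For the lower bound, an element of $F^{(2)}G^{(2)}/(F^{(2)}G)$ is read through its exponent vector, which is well defined by unique factorisation. Its $r$-part is $r_ar_b/(r_cr_d)$ and its $s$-part is $s_es_f/s_g$. Restricting to $a,b,c,d$ pairwise distinct and $e,f,g$ pairwise distinct gives $\gg L^4$ and $\gg L^3$ distinct values respectively.

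These choices can be combined with any choice of $p,q,u$ exponents from the factors. The $p$-exponent is a sum of four numerator terms minus three denominator terms, each in $[0,L)$, so it realises every integer in an interval of length $\gg L$. The $q$-exponent (from the four $F$-factors) and the $u$-exponent (from the three $G$-factors) each realise $\gg L$ values. Moreover, the $p,q,u$ exponents can be varied independently of the $r,s$ indices, since each factor's box coordinates and prime index are chosen separately.

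Multiplying gives $\gg L^4\cdot L^3\cdot L^3=L^{10}$ distinct elements, which is $\Theta(m(t/m)^8)$. The condition $L\ge4$ ensures enough distinct indices exist.

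The main obstacle is the independence bookkeeping in this last step. One must check that fixing the $r$- and $s$-patterns still leaves the full $\gg L^3$ range of $(p,q,u)$ exponents. The same $p$ coordinate is shared by $F$ and $G$, so the $p$-range must be argued from all seven factors jointly. I would handle it by fixing all but one numerator factor and sweeping that factor's $p$, $q$, and, for a $G$-factor, $u$ exponents over $[0,L)$.
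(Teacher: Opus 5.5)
Your construction is the paper's (with $2,3,5$ relabelled $p,q,u$), and your lower bound is the same count: independent $p$-, $q$- and $u$-valuation ranges times $\gg L^4$ patterns in the $r_k$ and $\gg L^3$ patterns in the $s_k$. The one difference is that you take the upper bound from \eqref{eq:bridge-m} via $t^8/m^7=m(t/m)^8$ rather than by the paper's direct count of valuation ranges and patterns, which is equally valid. The independence you flag as the main obstacle is immediate: the $p$-exponents of all seven factors, the $q$-exponents of the four $F$-factors, the $u$-exponents of the three $G$-factors and the prime indices are disjoint sets of free coordinates. Note also that sweeping a single factor, as you propose, cannot vary both $q$ and $u$; you need one $F$-factor and one $G$-factor for that.
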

\begin{proof}
Let $q_1,\ldots,q_L,r_1,\ldots,r_L$ be distinct primes other than
$2,3,5$, and set
\[
 A=\{2^i3^j:0\le i,j<L\},\quad
 B=\{2^i5^j:0\le i,j<L\},\quad
 F=A\{q_1,\ldots,q_L\},\quad
 G=B\{r_1,\ldots,r_L\}.
\]
Unique factorization gives $|A|=|B|=L^2$,
$|AF|=|BG|=(2L-1)^2L\le t$, and $|AB|=(2L-1)L^2\le t$. Since $F$ and
$G$ are full Cartesian products of their prime coordinates, the
elements of $W=F^{(2)}G^{(2)}/(F^{(2)}G)$ realize all combinations of:
a $2$-adic valuation in an interval of length $7L-6$, a $3$-adic one
of length $4L-3$, a $5$-adic one of length $3L-2$, at least
$\binom L2\binom{L-2}2$ sign patterns in the $q_\nu$ (two distinct
positive and two disjoint negative indices), and at least
$\binom L2(L-2)$ patterns in the $r_\nu$. Hence $|W|\gg L^{10}$.
Conversely the three small-prime valuations take $O(L)$ values each
and the $q$- and $r$-patterns at most $L^4$ and $L^3$, so
$|W|=\Theta(L^{10})=\Theta(m(t/m)^8)$, as $t/m=4L$.
\end{proof}

Improvements on Theorem \ref{thm:main} through this bridge must
therefore use more than the three budgets --- for instance the affine
structure $F=1+sS$, $G=1+s'S$ present in the application.

\section{Rich rays and one occupied projection fiber}\label{sec:rays}

An origin-preserving rotation and a nonzero uniform dilation preserve
$N,t$, and the occupied radial directions. Translation is not used.
After deleting the origin and restricting to one of a fixed number of
angular sectors, it suffices to consider $P$ contained in a sector of
angle less than $\pi/2$. All its dot products are positive.
Let $D$ be the number of occupied rays.

\begin{lemma}[Directional incidence bound]\label{lem:direction}
For such a set,
\begin{equation}\label{eq:direction}
 t\gg\sqrt{ND}.
\end{equation}
\end{lemma}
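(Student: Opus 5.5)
We aim to prove $t\gg\sqrt{ND}$ by an incidence count between points of $P$ and lines of the form $\{x: x\cdot q=\lambda\}$ with $q\in P$ and $\lambda\in\DotSet(P)$. The cheapest route, however, avoids Szemer\'edi--Trotter altogether and uses only the ray structure. Fix a point $q\in P$. For each occupied ray $\ell$ with direction $u$, the map $p\mapsto p\cdot q$ restricted to $P\cap\ell$ is injective provided $u\cdot q\neq0$. This holds automatically here: $P$ lies in a sector of angle less than $\pi/2$, so $u\cdot q>0$. Hence $|q\cdot P|\ge\max_\ell|P\cap\ell|$, but this alone only yields $t\ge N/D$. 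We therefore need a second bound that combines the two scales, namely $N/D$ points per ray and $D$ rays.

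The plan is as follows. Let $k_\ell=|P\cap\ell|$ and choose one representative point $q_\ell$ on each occupied ray. Consider the dot products $p\cdot q_{\ell'}$ with $p$ ranging over a ray $\ell$ and $\ell'$ ranging over rays. Since $p=r u_\ell$, we have $p\cdot q_{\ell'}=r\,(u_\ell\cdot q_{\ell'})$. Take logarithms: $\log(p\cdot q_{\ell'})=\log r+\log(u_\ell\cdot q_{\ell'})$. For a fixed ray $\ell$, the set $\{\log r\}$ has $k_\ell$ elements and the set $\{\log(u_\ell\cdot q_{\ell'})\}_{\ell'}$ has at least about $D/2$ elements. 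The latter holds because $\ell'\mapsto u_\ell\cdot q_{\ell'}$ is injective after we choose the representatives $q_{\ell'}$ suitably: we pick them so that these values are distinct, which is possible because each ray carries a representative whose norm can be selected among $k_{\ell'}$ options. This step is delicate. Any sumset satisfies $|X+Y|\ge|X|+|Y|-1$, so $t\ge k_\ell+|Y_\ell|-1$. That gives only $t\gg\max(N/D,\ D)$-type bounds, which are weaker than $\sqrt{ND}$ when $D$ is small. In fact $\sqrt{ND}\le\max(N/D,D)$ fails exactly when $N^{1/3}<D<N^{2/3}$. So the additive argument cannot suffice, and a genuine incidence argument is needed.

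The actual plan is therefore Szemer\'edi--Trotter. Fix any point $q\in P$. The lines $\{x: x\cdot q=\lambda\}$, $\lambda\in\DotSet(P)$, are at most $t$ parallel lines covering $P$. These lines alone give nothing useful, so instead we use the pairing between rays and level lines. For each point $q$ in a chosen set $Q\subseteq P$, the $N$ points of $P$ lie on $t$ lines $x\cdot q=\lambda$. Each such line is transverse to every ray in the sector, so it meets each ray at most once. Consequently every level line of $q$ carries at most $D$ points, and conversely each line contains at most $D$ points of $P$. Now take the line family $\mathcal L=\{x\cdot q=\lambda: q\in Q,\ \lambda\in\DotSet(P)\}$, where $Q$ is a set of $D$ points, one from each ray. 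These lines are pairwise distinct for distinct $(q,\lambda)$ because their normals are distinct directions. Then $|\mathcal L|\le Dt$, and the incidence count is exactly $I=ND$. Szemer\'edi--Trotter gives
\[
 ND\ll (N\cdot Dt)^{2/3}+N+Dt,
\]
from which $N^{1/3}D^{1/3}\ll t^{2/3}$, that is $t\gg\sqrt{ND}$, or else $ND\ll N+Dt$, which yields $t\gg N\ge\sqrt{ND}$ when $D\ge2$. The case $D=1$ is trivial.

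The main obstacle I anticipate is verifying the distinctness of the lines and handling the degenerate terms. Lines with different normals $q$ and $q'$ on different rays are automatically distinct. Lines with the same $q$ are distinct because their levels $\lambda$ differ. The remaining case, the lower-order term $Dt\gg ND$, needs a small case split: that alternative gives $t\gg N\ge\sqrt{ND}$ directly.
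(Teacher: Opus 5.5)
Your final argument is the paper's proof: one pin per occupied ray, at most $Dt$ level lines $x\cdot q=\lambda$ (distinct because normals differ across rays and levels differ within a ray), exactly $ND$ incidences, and Szemer\'edi--Trotter followed by a case split.

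There is one slip, in the degenerate case. From $ND\le C(N+Dt)$, the alternative where the $N$ term dominates only gives $D\le 2C$. The Szemer\'edi--Trotter constant $C$ need not be below $2$, so the claim that this ``yields $t\gg N$ when $D\ge 2$'' does not follow; for $2\le D\le 2C$ the inequality says nothing. The fix is the observation from your own first paragraph, which is exactly what the paper uses here. If $D=O(1)$, the heaviest ray carries at least $N/D\gg N$ points, and a pin maps that ray injectively into $\DotSet(P)$, since $u\cdot q>0$ in the acute sector. Hence $t\ge N/D\gg N\ge\sqrt{ND}$. Your handling of the case where the $Dt$ term dominates is correct as written.
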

\begin{proof}
For each occupied ray choose a pin $p_i\in P$. Its occupied level lines
$p_i\cdot x=z$, $z\in p_i\cdot P$, form a family of at most $t$ lines.
Families belonging to distinct rays have distinct directions, so their
union consists of $L\le Dt$ distinct lines and has exactly $ND$
incidences with $P$. Szemer\'edi--Trotter gives
\[
 ND\le C\bigl(N^{2/3}L^{2/3}+N+L\bigr),
\]
so at least one term on the right is at least $ND/(3C)$.
If $ND\le3CN$, then $D\le3C$, and the heaviest ray carries at least
$N/(3C)$ points, which its own pin maps injectively into the palette;
hence $t\ge N/(3C)$.
If $ND\le3CL\le3CDt$, then again $t\ge N/(3C)$.
If $ND\le3CN^{2/3}(Dt)^{2/3}$, cubing gives $ND\le27C^3t^2$.
Since $D\le N$, each case yields $t\gg\sqrt{ND}$.
This is the standard pinned directional estimate, also recorded in
\cite[Lemma 4.2]{Kok}.
\end{proof}

\begin{lemma}[Uniform lower occupancy without dyadic loss]\label{lem:slice}
There is a choice of coordinates, obtained by rotation and dilation,
and a finite slope set $S$ such that
\begin{enumerate}
\item $(1,s)\in P$ for every $s\in S$;
\item $|S|\ge N/(2t)$;
\item the full radial coordinate sets
\[
 A_s=\{x>0:(x,sx)\in P\}
\]
satisfy $|A_s|\ge m_0=N/(2D)$ for every $s\in S$.
\end{enumerate}
\end{lemma}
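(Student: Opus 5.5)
Here is the plan: discard the poorly populated rays, then pick one vertical line in well-chosen coordinates and take $S$ to be the slopes of the surviving rays whose points on that line lie in $P$.

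\textbf{Step 1 (pruning rays).} Call a ray \emph{rich} if it carries at least $N/(2D)$ points of $P$. The poor rays together hold fewer than $D\cdot N/(2D)=N/2$ points. Hence the set $P'$ of points on rich rays has $|P'|\ge N/2$. Replacing dyadic pigeonholing by this single threshold is what avoids any logarithmic loss: every rich ray has occupancy at least $m_0=N/(2D)$ with no further selection. Property (3) concerns the full radial sets $A_s$ in $P$, and these contain the rich-ray points, so it holds automatically for every rich ray we keep.

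\textbf{Step 2 (choosing coordinates).} Fix any point $p_0\in P'$. Rotate so that $p_0$ lies on the positive $x$-axis, and dilate so that $p_0=(1,0)$. Since $P$ lies in a sector of angle less than $\pi/2$, after a preliminary rotation the whole sector lies in the open right half-plane, where every point has a positive $x$-coordinate. Each ray there is $\{(x,sx):x>0\}$ for a slope $s$, and a point is $(1,s)$ exactly when it lies on the line $x=1$.

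The real task is to find a line $x=1$, in suitable coordinates, that meets at least $N/(2t)$ rich rays at points of $P$. Here I would use the dot-product palette instead of the $x$-coordinate directly. For a pin $p\in P'$, the map $q\mapsto p\cdot q$ sends $P'$ into a set of at most $t$ values. By pigeonhole, some fiber $\{q\in P': p\cdot q=z\}$ has at least $|P'|/t\ge N/(2t)$ points. This fiber lies on a line perpendicular to $p$, and it contains at most one point from each ray, since a ray through the origin meets a line not through the origin at most once (and $z>0$). So it gives at least $N/(2t)$ distinct rich rays, each with a point of $P$ on the line $p\cdot x=z$.

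Now rotate so that $p$ points along the positive $x$-axis, and dilate by $1/z$ times the appropriate factor so that this line becomes $x=1$. Rotation and uniform dilation preserve rays, so the rays stay rich and the dot-product structure is only scaled. Let $S$ be the set of slopes of these fiber points. Then (1) holds by construction, (2) holds with $|S|\ge N/(2t)$, and (3) holds from Step 1.

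\textbf{Main obstacle.} The delicate point is checking that the coordinate change keeps the configuration inside an admissible sector, so that $x>0$ on $P$ and the $A_s$ are well defined. Because the pin $p$ lies in the sector of angle less than $\pi/2$, every $q\in P$ has $p\cdot q>0$. After the rotation, all points therefore have positive $x$-coordinate, and the requirement $1+SS\subset\R_{>0}$ needed later is the positivity of $(1,s)\cdot(1,s')$, which is inherited. The quantities $N$, $t$ and $D$ are unchanged, so the lemma follows.
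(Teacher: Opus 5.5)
Your proposal is correct and follows the paper's proof essentially verbatim: the single threshold $N/(2D)$ leaves at least $N/2$ points on rich rays. Pigeonholing the at most $t$ values of $p\cdot q$ over those points then yields a line meeting at least $N/(2t)$ distinct rich rays, which a rotation and dilation move to $x=1$; after rotating the pin to the positive horizontal axis these values are just scaled horizontal coordinates, exactly as in the paper, and the preliminary normalization of $p_0$ at the start of your Step 2 is redundant.
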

\begin{proof}
Call a ray rich if it contains at least $m_0=N/(2D)$ points.
Nonrich rays together contain fewer than $N/2$ points, so the union
$Q$ of rich rays contains at least $N/2$ points.

Rotate an actual point of $P$ to the positive horizontal axis. Since
all points lie in an acute sector with that point, every point has
positive horizontal coordinate. There are at most $t$ distinct
horizontal coordinates of $Q$, because they are a fixed scalar
multiple of the dot products with that actual pin.
Some vertical line $x=x_0>0$ therefore contains at least $N/(2t)$
points of $Q$. It meets every occupied ray at most once.
Dilate the whole configuration by $1/x_0$ and let $S$ be the slopes
of these points. Every one of their rays is rich.
\end{proof}

\begin{remark}
Lemma \ref{lem:slice} uses the full radial sets on the selected rays.
It neither requires comparable upper occupancies nor extracts a common
radial profile. From \eqref{eq:direction},
$m_0=N/(2D)\gg N^2/t^2$.
\end{remark}

\section{Proof of the dot-product bound}

Apply Lemma \ref{lem:slice}. If $|S|$ is bounded, its lower bound
already gives $t\gg N$, so suppose $S$ is large.
Choose $s,s'\in S$ using Theorem \ref{thm:expander}, and put
\[
 A=A_s,\qquad B=A_{s'},\qquad
 F=1+sS,\qquad G=1+s'S.
\]
All these sets are positive. The point identities
\begin{align*}
 (a,sa)\cdot(1,u)&=a(1+su),\\
 (b,s'b)\cdot(1,u)&=b(1+s'u),\\
 (a,sa)\cdot(b,s'b)&=ab(1+ss')
\end{align*}
show respectively that
\begin{equation}\label{eq:threebudgets}
 |AF|\le t,\qquad |BG|\le t,\qquad |AB|\le t.
\end{equation}
The last inequality uses $1+ss'>0$, so the angular coefficient only
rescales the product set. These statements also hold if $s=s'$.

Corollary \ref{cor:bridge} now yields
\[
 |S|^{31/12}\ll
 \left|\frac{F^{(2)}G^{(2)}}{F^{(2)}G}\right|
 \le\frac{t^8}{m_0^7}
 \ll\frac{t^{22}}{N^{14}}.
\]
Together with $|S|\ge N/(2t)$, this gives
\[
 \left(\frac Nt\right)^{31/12}
   \ll\frac{t^{22}}{N^{14}},
 \qquad
 N^{199/12}\ll t^{295/12}.
\]
This proves Theorem \ref{thm:main}.
The fixed-density angular restriction only changes absolute constants.
There is no logarithmic loss, by Appendix \ref{app:expander}.
\qed

\begin{corollary}[Transfer of a better seven-factor expander]
\label{cor:beta}
If \eqref{eq:expander} holds with exponent $\beta>2$ in place of
$31/12$, then
\begin{equation}\label{eq:beta}
 t\gg N^{(14+\beta)/(22+\beta)}
      =N^{2/3+(\beta-2)/(3(22+\beta))}.
\end{equation}
\end{corollary}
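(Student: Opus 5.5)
The argument is the one just given for Theorem \ref{thm:main}, with the exponent $31/12$ replaced by $\beta$. First apply Lemma \ref{lem:slice} to obtain coordinates, a slope set $S$ with $|S|\ge N/(2t)$, and rich rays with $|A_s|\ge m_0=N/(2D)$. If $|S|$ is bounded, then $t\gg N$, which is stronger than \eqref{eq:beta} because the exponent there is less than $1$. Otherwise we use the assumed exponent-$\beta$ form of \eqref{eq:expander} to pick $s,s'\in S$. We then set $A=A_s$, $B=A_{s'}$, $F=1+sS$ and $G=1+s'S$. The three point identities give the budgets \eqref{eq:threebudgets}, and Corollary \ref{cor:bridge} gives
\[
 |S|^{\beta}\ll\left|\frac{F^{(2)}G^{(2)}}{F^{(2)}G}\right|\le\frac{t^8}{m_0^7}.
\]

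Next, Lemma \ref{lem:direction} gives $t\gg\sqrt{ND}$, so $D\ll t^2/N$ and $m_0\gg N^2/t^2$. Substituting this and $|S|\ge N/(2t)$, we get
\[
 \left(\frac Nt\right)^{\beta}\ll\frac{t^{8}\,t^{14}}{N^{14}},
\]
that is, $N^{14+\beta}\ll t^{22+\beta}$. Since $22+\beta>0$, taking roots gives the first form of \eqref{eq:beta}. The implied constants depend only on the constant in the hypothesis, because $\beta$ is fixed.

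The second expression in \eqref{eq:beta} follows from
\[
 \frac{14+\beta}{22+\beta}-\frac23=\frac{3(14+\beta)-2(22+\beta)}{3(22+\beta)}=\frac{\beta-2}{3(22+\beta)}.
\]
The hypothesis $\beta>2$ is needed only so that this beats the trivial $2/3$. As a check, $\beta=31/12$ gives $199/295$.

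No step is a real obstacle. The only points needing care are the bounded-$|S|$ case and the angular-sector reduction, and both affect only absolute constants, exactly as in the proof of Theorem \ref{thm:main}.
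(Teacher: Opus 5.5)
Your proposal is correct and follows the paper's own proof, which simply repeats the calculation from the proof of Theorem~\ref{thm:main} with $\beta$ in place of $31/12$: this gives $(N/t)^{\beta}\ll t^{22}/N^{14}$, hence $N^{14+\beta}\ll t^{22+\beta}$. The paper also remarks that a polylogarithmic loss in the assumed expander would carry over to the conclusion; the statement as posed does not need this.
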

\begin{proof}
Repeat the preceding calculation with $\beta$.
If the assumed expander instead has a polylogarithmic loss, the
conclusion has a polylogarithmic loss as well.
\end{proof}

\begin{remark}[Numerical comparison]
For the expander exponent $\beta=5/2$ of \cite{HRS}, the same growth
argument already gives $t\gg N^{33/49}=N^{2/3+1/147}$; for
$\beta=31/12$ it gives $199/295$, and
$199/295-319/475=84/28025>0$.
\end{remark}

\subsection{Distinct pairs}\label{sec:distinct}

Define $t_0=|\{p\cdot q:p,q\in P,\ p\ne q\}|$.
The same exponent holds with $t_0$ in place of $t$.
Indeed every full pinned palette has at most $t_0+1$ elements, and
each radial line contains at most $t_0+1$ points. Thus the directional
incidence bound and the rich-ray slice selection work with budget
$t_0+1$.

In \eqref{eq:threebudgets}, each of $AF$ and $BG$ has at most one
additional self-pair value, so its size is at most $t_0+1$.
If $s\ne s'$, the cross product $AB$ involves only distinct points.
If $s=s'$, the full $AA$ has at most $|A|\le t_0+1$ additional diagonal
values, so $|AA|\le2t_0+1$. Hence all three budgets are bounded by
$2t_0+1$, proving the same power bound.

The proof combines exactly three inputs: the incidence bound for the
number of directions, the rank-two dot-product identity for the three
product budgets, and the separate minimizing subsets that let those
budgets enter the expander simultaneously. No additive-energy or
unit-equation estimates are assumed.

\section{Using all projection fibers and radial occupancies}
\label{sec:profiles}

Here $P$ is a nonzero configuration contained in an acute sector.
Let its occupied rays have sizes $a_1,\ldots,a_D$, with
$\sum_i a_i=N$. For an actual pin $p\in P$ write
$\tau_p=|p\cdot P|$.

\begin{theorem}[Third moment of the radial profile]\label{thm:profile}
For every actual pin $p\in P$,
\begin{equation}\label{eq:thirdmoment}
 \sum_{i=1}^D a_i^3
 \ll \tau_p^{31/42}t^{16/7}N^{11/42}.
\end{equation}
In particular, putting
$H_2=D^2\sum_i a_i^3/N^3\ge1$ and $J_p=t/\tau_p\ge1$ gives
\begin{equation}\label{eq:profilebound}
 t\gg H_2^{42/295}J_p^{31/295}N^{199/295}.
\end{equation}
\end{theorem}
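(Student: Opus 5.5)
The plan is to run the argument of the preceding section on a single well-chosen fiber of the pin $p$, with the dyadic-free selection weighted by the ray occupancies. The second claim \eqref{eq:profilebound} then follows formally from \eqref{eq:thirdmoment} and Lemma \ref{lem:direction}.

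\emph{Reduction of \eqref{eq:profilebound}.} Since $t^2\gg ND$, we have $D^2\ll t^4/N^2$. Substituting $\sum_i a_i^3=H_2N^3/D^2\gg H_2N^5/t^4$ into \eqref{eq:thirdmoment} and writing $\tau_p=t/J_p$ gives
\[
 H_2J_p^{31/42}N^{199/42}\ll t^{4+16/7+31/42}=t^{295/42}.
\]
Raising to the power $42/295$ is exactly \eqref{eq:profilebound}. So everything rests on \eqref{eq:thirdmoment}.

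\emph{Choosing the fiber.} Rotate so that $p$ lies on the positive horizontal axis, as in Lemma \ref{lem:slice}. The level lines $p\cdot x=z$ are then $\tau_p$ vertical lines, and each meets every ray at most once. For a vertical line $\ell$, let $S_\ell$ be the slopes of the points of $P$ on $\ell$, and for $s\in S_\ell$ let $a_s$ be the occupancy of the ray of slope $s$. Set
\[
 X_\ell=\sum_{s\in S_\ell}a_s^2,\qquad Y_\ell=\sum_{s\in S_\ell}a_s .
\]
Counting points ray by ray gives $\sum_\ell X_\ell=\sum_i a_i^3=:W$ and $\sum_\ell Y_\ell=N$. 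Hence some $\ell$ satisfies
\[
 X_\ell-\frac{W}{2N}Y_\ell\ge\frac{W}{2\tau_p}.
\]
For this $\ell$ we get both $X_\ell\ge W/(2\tau_p)$ and $Y_\ell\le 2NX_\ell/W$; this is the weighted-median-type choice that avoids dyadic pigeonholing. Dilate so that $\ell$ is the line $x=1$. For any $s,s'\in S_\ell$, the budgets \eqref{eq:threebudgets} hold verbatim with $A=A_s$, $B=A_{s'}$, $F=1+sS'$, $G=1+s'S'$, for any subset $S'\subseteq S_\ell$.

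\emph{Expander on upper level sets.} For $m>0$ put $S'(m)=\{s\in S_\ell:a_s\ge m\}$. Theorem \ref{thm:expander} applied to $S'(m)$, combined with Corollary \ref{cor:bridge}, gives $|S'(m)|^{31/12}m^7\ll t^8$, that is,
\[
 |S'(m)|\ll t^{96/31}m^{-84/31}.
\]
Split $X_\ell$ at a threshold $M$:
\[
 X_\ell\le MY_\ell+\int_M^\infty 2m\,|S'(m)|\,dm\ll MY_\ell+t^{96/31}M^{-22/31}.
\]
Take $M=W/(4N)$. By the choice of $\ell$, the term $MY_\ell$ is at most $X_\ell/2$ and is absorbed. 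This leaves $W/\tau_p\ll X_\ell\ll t^{96/31}(N/W)^{22/31}$.

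\emph{Matching the exponents, the expected obstacle.} The bound just obtained is $W\ll\tau_p^{31/53}t^{96/53}N^{22/53}$. I expect the main work to be converting it into the exact shape \eqref{eq:thirdmoment}. The available extra constraints are $a_s\le\tau_p$ (the pin injects each ray into its palette), $\tau_p\le t$, and $t\gg N^{1/2}$.

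Dividing the target by the bound above, the ratio is $\tau_p^{341/2226}t^{1056/2226}N^{-341/2226}$. This is $\ge1$ because $\tau_p\ge N^{1/2}$ and $t\ge N^{1/2}$ already give $\tau_p^{341}t^{1056}\ge N^{1397/2}\ge N^{341}$. If this comparison holds in all ranges, \eqref{eq:thirdmoment} follows. Otherwise, I would first try to redo the threshold optimization with the truncation $a_s\le\tau_p$ in the integral, which is the natural source of the $\tau_p^{31/42}$ factor. In either case the essential ingredients are the weighted fiber choice and the level-set application of Corollary \ref{cor:bridge}.
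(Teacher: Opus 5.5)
There is a genuine error in your fiber selection: the identity $\sum_\ell Y_\ell=N$ is false. Each point $x$ on a ray of occupancy $a_i$ contributes $a_i$ to $Y_{\ell(x)}$, so $\sum_\ell Y_\ell=\sum_i a_i^2=:V$. What equals $N$ is $\sum_\ell|S_\ell|$.

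As a result, the fiber you pigeonhole need not exist. Worse, the intermediate bound $W\ll\tau_p^{31/53}t^{96/53}N^{22/53}$ is false. Take a geometric progression on one ray, $P=\{2^k(1,0):0\le k<N\}$. Then $D=1$, $W=N^3$, $\tau_p=N$ and $t=2N-1$, while the right side is $\asymp N^{149/53}=o(N^3)$. In this example every fiber is a single point with $X_\ell=N^2$ and $Y_\ell=N$, so $X_\ell-\frac{W}{2N}Y_\ell<0$ on every fiber.

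Getting a bound strictly stronger than the theorem should have been the warning sign. Your ``matching the exponents'' step is not a real obstacle; it is an artifact of this miscount. The truncation $a_s\le\tau_p$ plays no role. Separately, $\tau_p\ge N^{1/2}$ fails for general pins: points on the line $x_1=1$ pinned at $(1,0)$ give $\tau_p=1$. You would not have needed that claim anyway.

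Your architecture can be repaired with one change. Pigeonhole with weight $W/(2V)$ instead of $W/(2N)$. Some fiber then has $X_\ell\ge W/(2\tau_p)$ and $Y_\ell\le 2VX_\ell/W$. Take $M=W/(4V)$, so that $MY_\ell\le X_\ell/2$ is absorbed. Your level-set integral then gives $W^{53/31}\ll\tau_p t^{96/31}V^{22/31}$. By Cauchy--Schwarz, $V\le(NW)^{1/2}$, which yields $W^{42/31}\ll\tau_p t^{96/31}N^{11/31}$. This is exactly \eqref{eq:thirdmoment}.

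An equivalent repair counts points $k_\ell=|S_\ell|$ instead. Bound the low part by $M^2k_\ell$ and take $M=(W/4N)^{1/2}$.

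The paper does not select a fiber at all. It proves a per-fiber layer-cake bound, $\sum_{x\in\ell_z}a_{\operatorname{ray}(x)}^2\le\frac{42}{11}K^{31/42}k_z^{11/42}$ with $K=Ct^{96/31}$. It then sums over the $\tau_p$ fibers using the H\"older estimate $\sum_z k_z^{11/42}\le\tau_p^{31/42}N^{11/42}$, which is where the factor $\tau_p^{31/42}$ comes from.

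Your derivation of \eqref{eq:profilebound} from \eqref{eq:thirdmoment}, using $D\ll t^2/N$, is correct.
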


Since $H_2,J_p\ge1$, \eqref{eq:profilebound} contains
Theorem \ref{thm:main}; the proof uses only the fiber identities and
Lemma \ref{lem:direction}, so it is a second route to the main bound,
independent of Lemma \ref{lem:slice}.

\begin{proof}
On a nonempty level line $\ell_z=\{x:p\cdot x=z\}$ let
$k_z=|P\cap\ell_z|$ and
\[
 n_z(h)=|\{x\in P\cap\ell_z:
              a_{\operatorname{ray}(x)}\ge h\}|.
\]
Rotate and dilate this line to $x_1=1$.
The same three product budgets as in \eqref{eq:threebudgets},
applied only to its points counted by $n_z(h)$, and
Theorem \ref{thm:expander} imply
\[
 n_z(h)^{31/12}\ll t^8/h^7,\qquad
 n_z(h)\le\min(k_z,K h^{-q}),
 \quad q=\frac{84}{31},\quad K=Ct^{96/31}.
\]
The full radial sets supply the weights; no points on other fibers
are removed from those sets.
For $0<u<q$, the layer-cake identity and splitting at
$(K/k_z)^{1/q}$ give
\[
 \sum_{x\in P\cap\ell_z}
       a_{\operatorname{ray}(x)}^u
 \le\frac{q}{q-u}K^{u/q}k_z^{1-u/q}.
\]
Sum over the $\tau_p$ nonempty fibers and use
$\sum_z k_z^{1-u/q}\le\tau_p^{u/q}N^{1-u/q}$.
Each ray of size $a_i$ contributes $a_i^{u+1}$.
Taking $u=2$ proves \eqref{eq:thirdmoment}; the interpolation
constant is $q/(q-2)=42/11$, so there is no endpoint logarithm.
After taking 42nd powers, \eqref{eq:thirdmoment} becomes
$H_2^{42}N^{115}\ll D^{84}\tau_p^{31}t^{96}$.
Use $D\ll t^2/N$ from Lemma \ref{lem:direction} and
$\tau_p=t/J_p$ to obtain \eqref{eq:profilebound}.
\end{proof}

\begin{remark}[What a nearly extremal configuration must satisfy]
The more precise form, with $L=t^2/(ND)$, is
\[
 t\gg H_2^{42/295}L^{84/295}J_p^{31/295}N^{199/295}.
\]
Consequently, if $t=O(N^{199/295})$, the normalized radial third
moment is bounded, $D$ is comparable to $t^2/N$, and every actual
pin has $\tau_p$ comparable to $t$.
Thus a power-sized defect in any of these three requirements gives
a strict power improvement for that class of configurations.
Regular numerical profiles can saturate all three estimates;
this argument does not exclude that remaining case.
\end{remark}

\begin{proposition}[Additional gain from aligned radial overlap]
\label{prop:overlap}
In the rich-ray extraction, suppose the two slopes selected by
Theorem \ref{thm:expander} have radial profiles $A,B$ satisfying
$|A\cap\lambda B|\ge\theta m_0$ for some $\lambda>0$ and $\theta>0$.
Then
\begin{equation}\label{eq:overlap}
 t\gg\theta^{72/259}N^{175/259}.
\end{equation}
This is a conditional statement about the selected profiles.
\end{proposition}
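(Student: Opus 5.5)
The plan is to rerun the argument of the preceding section with one change. Instead of the two-base bridge of Corollary~\ref{cor:bridge}, I would use a \emph{single} growth base taken inside the aligned overlap $C=A\cap\lambda B$. The goal is the seven-factor bound
\[
 \left|\frac{F^{(2)}G^{(2)}}{F^{(2)}G}\right|\ll\frac{t^7}{(\theta m_0)^6},
\]
which trades one factor of $t/m_0$ from \eqref{eq:bridge-m} for $\theta$-losses. A short exponent calculation, done in advance, shows this is exactly the bound that produces $175/259$ and $\theta^{72/259}$.

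\textbf{Step 1: budgets on the overlap.} Let $A=A_s$, $B=A_{s'}$, $F=1+sS$, $G=1+s'S$ be as in the proof of Theorem~\ref{thm:main}, with $s,s'$ chosen by Theorem~\ref{thm:expander}. Put $C=A\cap\lambda B$, so $|C|\ge\theta m_0$. Since $C\subseteq A$, we have $|CF|\le|AF|\le t$. Since $C\subseteq\lambda B$ and dilation preserves cardinality, $|CG|\le|\lambda BG|=|BG|\le t$. Set $H=F\cup G$; then $|CH|\le|CF|+|CG|\le2t$.

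\textbf{Step 2: one minimizer for the union.} Choose a nonempty $X\subseteq C$ minimizing $|XH|/|X|$, and let $\alpha$ be this minimum. Then $\alpha\le 2t/|C|$, and Lemma~\ref{lem:petridis} applies to $(X,H)$ with every auxiliary set. Iterating it as in \eqref{eq:jointgrowth} gives $|XH^{(k)}|\le\alpha^k|X|$ for all $k\ge0$. Now $F^{(2)}G^{(2)}\subseteq H^{(4)}$ and $F^{(2)}G\subseteq H^{(3)}$. Applying Lemma~\ref{lem:ruzsa} with common set $X$, numerator $U=F^{(2)}G^{(2)}$ and denominator $V=F^{(2)}G$ yields
\[
 |U/V|\le\frac{|XH^{(4)}|\,|XH^{(3)}|}{|X|}\le|X|\alpha^7\le|C|\Bigl(\frac{2t}{|C|}\Bigr)^7\ll\frac{t^7}{|C|^6}\le\frac{t^7}{(\theta m_0)^6}.
\]
The $|AB|$ budget is not needed here. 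The point is that the overlap hypothesis lets one base serve both growth operators, so no product base $X_1X_2$ has to be paid for.

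\textbf{Step 3: exponent bookkeeping.} Theorem~\ref{thm:expander} gives $|S|^{31/12}\ll t^7/(\theta m_0)^6$. Insert $|S|\ge N/(2t)$ from Lemma~\ref{lem:slice} and $m_0\gg N^2/t^2$ from Lemma~\ref{lem:direction}, which give
\[
 \Bigl(\frac Nt\Bigr)^{31/12}\ll\frac{t^{19}}{\theta^6N^{12}}.
\]
Multiplying out by $12$ gives $\theta^{72}N^{175}\ll t^{259}$, which is \eqref{eq:overlap}.

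I expect the main obstacle to be deciding which base to use rather than any computation. Corollary~\ref{cor:bridge} applied with $A_1=A_2=C$ only gives $t^8/|C|^7$, which loses a power of $\theta$ and does not reach $175/259$. The saving comes from minimizing growth against $F\cup G$ simultaneously and paying only the harmless factor $2^7$. The only point to check carefully is that Lemma~\ref{lem:petridis} iterates cleanly for the single set $H$ with auxiliary sets $H^{(j)}$; this is the $r=1$ case of \eqref{eq:jointgrowth}.
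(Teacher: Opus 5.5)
Your proposal is correct and is essentially the paper's proof. It uses the same single base $C=A\cap\lambda B$ with $H=F\cup G$ and $|CH|\le 2t$. It then applies the one-base case of Proposition~\ref{prop:multibase}, which your Step~2 simply unpacks via Lemmas~\ref{lem:petridis} and~\ref{lem:ruzsa}, to get the bound $c(2t/c)^7$, and finishes with the same exponent bookkeeping leading to $\theta^{72}N^{175}\ll t^{259}$.
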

\begin{proof}
Put $C=A\cap\lambda B$, $c=|C|$, and $H=F\cup G$.
The inclusions $CF\subseteq\DotSet(P)$ and
$CG\subseteq\lambda\DotSet(P)$ give $|CH|\le2t$.
The one-base case of Proposition \ref{prop:multibase} yields
\[
 |F^{(2)}G^{(2)}/(F^{(2)}G)|
 \le |H^{(4)}/H^{(3)}|
 \le c(2t/c)^7.
\]
Combining $c\ge\theta m_0\gg\theta N^2/t^2$ with the expander and
$|S|\gg N/t$ gives
$N^{175/12}\ll\theta^{-6}t^{259/12}$,
which is \eqref{eq:overlap}.
\end{proof}

\section{No linear bound for pinned dot products}\label{sec:pinned}

Theorem \ref{thm:profile} shows that near the global extremal every
pinned palette is as large as the whole palette. In general, however,
the pinned quantity admits no linear lower bound.

\begin{proposition}\label{prop:pinned}
For every integer $D\ge2$ there are arbitrarily large finite sets
$P\subset\R^2$, occupying $D$ rays, with
\[
 \max_{p\in P}\;|\{p\cdot q:q\in P\}|\ \le\ \frac{e\,|P|}{D}.
\]
In particular, for every $\varepsilon>0$ there are arbitrarily large
$P$ with $\max_{p\in P}|p\cdot P|\le\varepsilon|P|$.
\end{proposition}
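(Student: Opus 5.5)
The plan is to take $D$ fixed directions and place on every ray the same multiplicatively structured radial set, generated by the angular coefficients $c_{ij}=u_i\cdot u_j$ themselves. Multiplying by a coefficient then only shifts a multiplicative box by one unit, so each pinned palette is essentially a single box, of size about $|P|/D$.

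\textbf{Choice of directions.} Fix $D$ unit vectors $u_1,\dots,u_D$ inside an open sector of angle less than $\pi/2$, so that every $c_{ij}=u_i\cdot u_j>0$ and $c_{ii}=1$. I will choose the angles so that the $M=\binom D2$ numbers $c_{ij}$ with $i<j$ are multiplicatively independent. This holds for a generic choice: each nontrivial relation $\prod c_{ij}^{e_{ij}}=1$ with $e\in\Z^M\setminus\{0\}$ cuts out a proper real-analytic subset of the parameter space. To see that it is proper, let one angle approach another, so that one coefficient tends to $1$ while the others vary independently. There are only countably many relations, so a generic point avoids all of them. I expect this step to be the main point needing care. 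If the analytic argument is awkward, I would instead choose the angles inductively, at each step avoiding the countably many values for which the new coefficients satisfy a relation with the previously chosen ones.

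\textbf{The configuration.} Let $n\ge D$ and put
\[
 R=\Bigl\{\prod_{i<j}c_{ij}^{k_{ij}}:0\le k_{ij}<n\Bigr\},\qquad P=\bigcup_{i=1}^D R\,u_i .
\]
By independence $|R|=n^M$, and distinct rays are disjoint, so $|P|=Dn^M$. This can be made arbitrarily large by letting $n\to\infty$.

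\textbf{Counting a pinned palette.} Pin $p=xu_i$ with $x\in R$. Then
\[
 p\cdot P=\bigcup_{j=1}^D x\,c_{ij}R .
\]
In exponent coordinates, $c_{ij}R$ is the box $[0,n)^M$ translated by the unit vector $e_{\{i,j\}}$ when $j\ne i$, and is $R$ itself when $j=i$. The union of the box with $D-1$ unit translates of it has at most $n^M+(D-1)n^{M-1}$ lattice points, since each translate adds only one face of size $n^{M-1}$. Multiplication by $x$ is injective, so
\[
 |p\cdot P|\le n^M\Bigl(1+\frac{D-1}{n}\Bigr)\le\frac{|P|}{D}\Bigl(1+\frac1D\Bigr)^{D}\le\frac{e|P|}{D}
\]
once $n\ge D(D-1)$. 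The middle inequality uses $1+(D-1)/n\le1+1/D\le(1+1/D)^D$. For the second assertion, take $D\ge e/\varepsilon$.
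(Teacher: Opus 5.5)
Your construction is correct, and it takes a genuinely different route from the paper.

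\textbf{How the two constructions differ.} The paper uses the explicit integer directions $(1,i)$. It takes $A_K$ to be the integers whose exponent vectors, over the primes dividing the numbers $1+ij$, lie in the simplex $T_K$. Every coefficient $1+ij$, including the self-coefficient $1+i^2\ne1$, is then absorbed by the inclusion $f(i,j)+T_K\subseteq T_{K+H}$, at a cost of $\binom{K+H+r}{r}/\binom{K+r}{r}\le e$ once $K\ge rH$.

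You instead normalize to unit vectors, so that $c_{ii}=1$, and make the $\binom D2$ cross coefficients a free basis. Each of the $D-1$ nontrivial translates is then a unit shift of a box and adds a single face. The count $n^M+(D-1)n^{M-1}$ is cleaner and needs no binomial estimate.

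\textbf{What the paper's route buys.} It is explicit ($P\subset\Z^2$, no genericity), and it has better quantitative dependence. Your box needs side $n\gtrsim D$ in $\binom D2$ dimensions, so $\log N\asymp D^2\log D$. As written, this yields only $\max_p|p\cdot P|\ll N\sqrt{\log\log N/\log N}$, rather than the $N/\sqrt{\log N}$ of Remark~\ref{rem:pinnedrange}.

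\textbf{The step you still need to prove.} The multiplicative independence of the $c_{ij}$ is not yet justified. Your heuristic that the other coefficients ``vary independently'' is false for $D\ge3$: the $\binom D2$ cosines depend on only $D-1$ parameters. Letting $c_{kl}\to1$ merely kills the term $\log c_{kl}$ and proves nothing.

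The claim is true, though. Put $\Phi_e(\theta)=\sum_{i<j}e_{ij}\log\cos(\theta_i-\theta_j)$ on $(-\pi/4,\pi/4)^D$. Then $\partial_{\theta_k}\partial_{\theta_l}\Phi_e=e_{kl}\sec^2(\theta_k-\theta_l)$, so $\Phi_e\not\equiv0$ for $e\ne0$. Its zero set is therefore a null set, and a generic choice of angles avoids all countably many relations.

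Your inductive alternative also works, with one extra observation. Fix the earlier angles and suppose some $e_j\ne0$. Then $\theta\mapsto\sum_{j<k}e_j\log\cos(\theta-\theta_j)$ is nonconstant, because its derivative $-\sum_j e_j\tan(\theta-\theta_j)$ has a pole at $\theta_j+\pi/2$. Hence each of its level sets is countable.
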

\begin{proof}
Use the directions $(1,i)$, $1\le i\le D$. Let $p_1,\ldots,p_r$ be the
primes dividing some $1+ij$ with $1\le i,j\le D$, write
$1+ij=\prod_\nu p_\nu^{f_\nu(i,j)}$, and put
$H=\max_{i,j}\sum_\nu f_\nu(i,j)$. For $K\ge rH$ let
\[
 T_K=\Bigl\{e\in\Z_{\ge0}^r:\sum\nolimits_\nu e_\nu\le K\Bigr\},\qquad
 A_K=\Bigl\{\prod\nolimits_\nu p_\nu^{e_\nu}:e\in T_K\Bigr\},
\]
so that $|A_K|=|T_K|=\binom{K+r}{r}=:m$ by unique factorization.
Take $P=\{a(1,i):a\in A_K,\ 1\le i\le D\}$, so that $N=Dm$.
A pin $p=a(1,j)$ has
$p\cdot P=a\bigcup_{i\le D}(1+ij)A_K$, and the exponent set of
$(1+ij)A_K$ is $f(i,j)+T_K\subseteq T_{K+H}$. Hence
\[
 |p\cdot P|\le\binom{K+H+r}{r}
 =m\prod_{\ell=1}^{r}\frac{K+H+\ell}{K+\ell}
 \le m\Bigl(1+\frac{H}{K+1}\Bigr)^{r}
 \le m\,e^{rH/(K+1)}\le em=\frac{eN}{D}.
\]
Letting $K\to\infty$ gives arbitrarily large $N$; the second statement
follows with $D>e/\varepsilon$.
\end{proof}

\begin{remark}\label{rem:pinnedrange}
Every pin in the construction also satisfies $|p\cdot P|\ge m$, and
$A_KA_K$ has exponent set exactly $T_{2K}$, so the global palette pays
the full doubling $\binom{2K+r}{r}$: compressing every pin inflates
the global set, in exact agreement with Theorem \ref{thm:profile}.
Quantitatively, $r\le\pi(1+D^2)\ll D^2/\log D$ and
$H\le\log_2(1+D^2)$, so the choice $K=rH$ gives
$\log N\ll D^2\log\log D/\log D$, hence $D\gg\sqrt{\log N}$ and
\[
 \max_{p\in P}\,|p\cdot P|\ \ll\ \frac{N}{\sqrt{\log N}}.
\]
In the other direction, the pinned form of Lemma \ref{lem:direction}
(see \cite[Lemma 4.2]{Kok}) together with the heaviest radial line
gives $\max_p|p\cdot P|\gg\max(\sqrt{ND},N/D)\ge N^{2/3}$ for every
finite $P$; for further pinned estimates see \cite{AGHS,BMS}. Whether
$\max_p|p\cdot P|$ can be $O(N^{1-\delta})$ for some $\delta>0$
remains open.
\end{remark}

\appendix
\section{A logarithm-free reconstruction of the expander}
\label{app:expander}

The convexity and squeezing architecture in this appendix is due to
Roche-Newton--Wong \cite[Section 4]{RNW}. We replace the dyadic
selection by a weighted median and verify the algebraic surface
directly in quadratic coordinates.
The following incidence theorem is used as a published input:
if $Q\in\R[X,Y,Z]$ is irreducible over $\mathbb C$ of bounded degree, none of its
partial derivatives vanishes identically, and its surface is not
locally additive under separate complex analytic coordinate changes, then
\begin{equation}\label{eq:SZ}
 |Z(Q)\cap(U\times V\times W)|\ll M^{12/7},
 \qquad M=\max(|U|,|V|,|W|).
\end{equation}
This follows from \cite[Theorem 1.4]{SZ}, whose constant depends
only on the degree and has no logarithmic or epsilon loss.

\begin{lemma}[Median squeezing]\label{lem:median}
Suppose
$y_1<z_1<y_2<z_2<\cdots<y_n<z_n$, and the positive numbers
$x_j=z_j-y_j$ are strictly increasing.
Put $Y=\{y_j\}$ and $Z=\{z_j\}$.
For every nonempty
$I\subseteq\{1,\ldots,\lfloor n/2\rfloor-1\}$, there is
$I'\subseteq I$ with $|I'|\ge |I|/2$ such that
\begin{equation}\label{eq:median}
 |2Y+2Z-2Y-Z|
 \ge \frac{n|I|}{4}
       |\{x_{j+1}-x_j:j\in I'\}|.
\end{equation}
\end{lemma}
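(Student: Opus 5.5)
The plan is a direct squeezing construction: exhibit many elements of $2Y+2Z-2Y-Z$ that lie in pairwise disjoint intervals. Write $d_j=x_{j+1}-x_j>0$. For indices $k,i,j$ consider
\[
 e(k,i,j)=y_k+(z_i-y_i)+(z_{j+1}-y_{j+1}-z_j+y_j)=y_k+x_i+d_j .
\]
Its numerator uses $y_k,y_j\in Y$ and $z_i,z_{j+1}\in Z$. Its denominator uses $y_i,y_{j+1}\in Y$ and $z_j\in Z$. Hence $e(k,i,j)\in 2Y+2Z-2Y-Z$. The task is to choose ranges for $k,i,j$ so that the values $y_k+x_i+d_j$ are distinct for distinct triples $(k,i,d_j)$. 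Here $d_j$ is counted only through its value, so the gap palette enters as a factor.

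\textbf{Step 1 (median split).} Order the indices of $I$ by their gap $d_j$ and let $\delta$ be a (weighted) median of these gaps. Let $I'$ be the lower half, with $d_j\le\delta$ and $|I'|\ge|I|/2$. Let $I''$ be the complementary upper half, with $d_i\ge\delta$ and $|I''|\ge|I|/2$ up to rounding. We would like the split to be strict, $d_j<d_i$ for $j\in I'$ and $i\in I''$. Ties at the value $\delta$ have to be handled. The first thing to try is half-open windows, using $d_j\le d_i$ together with $x_i+d_j\le x_{i+1}$. If two windows can then meet at a shared endpoint, the alternative is to assign every tied index to $I'$ and take $I''$ to be the indices with $d_i>\delta$. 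This tie handling is where ``weighted'' median matters, since both halves must keep size $\ge|I|/2$ (or $|I|/2$ up to rounding absorbed in the constant $1/4$). I expect it to be the main technical obstacle.

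\textbf{Step 2 (inner windows).} For $i\in I''$ and $j\in I'$ we have $x_i<x_i+d_j\le x_i+d_i=x_{i+1}$. So $x_i+d_j$ lies in the window $(x_i,x_{i+1}]$. Since $x$ is strictly increasing, these windows are pairwise disjoint for distinct $i$. Consequently the number of distinct values $x_i+d_j$ with $i\in I''$, $j\in I'$ is $|I''|\cdot|\{d_j:j\in I'\}|$.

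\textbf{Step 3 (outer windows).} Take $k$ with $\lfloor n/2\rfloor\le k\le n$; this gives at least $n/2$ choices. Since every $i\in I''\subseteq I$ satisfies $i+1\le\lfloor n/2\rfloor\le k$, we get $0<x_i+d_j\le x_{i+1}\le x_k$. Therefore $e(k,i,j)\in(y_k,z_k]$. The intervals $(y_k,z_k]$ are pairwise disjoint by the interlacing $y_1<z_1<y_2<\cdots$. Distinct $k$ therefore give distinct elements, and within a fixed $k$ Step 2 applies. Multiplying the counts gives
\[
 |2Y+2Z-2Y-Z|\ge \tfrac n2\cdot\tfrac{|I|}2\cdot|\{x_{j+1}-x_j:j\in I'\}|,
\]
which is \eqref{eq:median}. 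The only delicate points are the endpoint and tie bookkeeping in Step 1 and checking that the range of $k$ really has $n/2$ elements; both are resolved by the hypothesis $I\subseteq\{1,\ldots,\lfloor n/2\rfloor-1\}$.
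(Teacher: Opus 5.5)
Your construction is the paper's. Its index $j$ with $\gamma_j\ge\tau$ is your $i\in I''$, and its retained gap values $v\le\tau$ are your $d_j$ with $j\in I'$. The outer translates $y_k$, with $k$ in the upper half of $\{1,\ldots,n\}$, separate the blocks exactly as in your Step 3. Steps 2 and 3 are correct, provided Step 1 delivers $d_j\le d_i$ for all $j\in I'$, $i\in I''$ and $|I''|\ge|I|/2$.

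The gap is Step 1, which you leave unresolved. The ``alternative'' you float fails: if you put all tied indices into $I'$ and take $I''=\{i:d_i>\delta\}$, then when all gaps $d_j$ ($j\in I$) coincide you get $I''=\emptyset$ and nothing at all. Rounding also cannot be absorbed into the constant $1/4$ when $I'$ and $I''$ are disjoint. For $|I|=1$ a disjoint split forces $|I''|=0$. For $|I|=3$ it forces $|I''|\le1<3/2$, and the spare choices of $k$ do not compensate. Your closing claim that the ties are ``resolved by the hypothesis $I\subseteq\{1,\ldots,\lfloor n/2\rfloor-1\}$'' is also wrong: that hypothesis only handles the range of $k$.

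The fix is to let the two halves overlap. Step 2 already tolerates this, since it uses only $d_j\le d_i$. The shared-endpoint worry does not arise either, because the half-open windows $(x_i,x_{i+1}]$ are pairwise disjoint. Concretely:
\begin{itemize}
\item Let $\delta$ be the smallest value in $\{d_j:j\in I\}$ with $|\{j\in I:d_j\le\delta\}|\ge|I|/2$.
\item Put $I'=\{j\in I:d_j\le\delta\}$ and $I''=\{i\in I:d_i\ge\delta\}$.
\item By minimality of $\delta$, fewer than $|I|/2$ indices have $d_j<\delta$, so $|I''|>|I|/2$.
\end{itemize}
With this choice your count gives $\frac n2\cdot\frac{|I|}2\cdot|\{d_j:j\in I'\}|$ as claimed. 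This is exactly the paper's choice $I'=S_\tau$.
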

\begin{proof}
Write $\gamma_j=x_{j+1}-x_j>0$ and $\Gamma=\{\gamma_j:j\in I\}$, and for
$v\in\Gamma$ put $S_v=\{j\in I:\gamma_j\le v\}$. Let $\tau\in\Gamma$ be
minimal with $2|S_\tau|\ge|I|$; the largest element of $\Gamma$
qualifies, so $\tau$ exists. Take $I'=S_\tau$. Then $|I'|\ge|I|/2$, and
the gap set displayed in \eqref{eq:median} equals
$\{v\in\Gamma:v\le\tau\}$; let $q$ be its size.

At least half the indices see all $q$ retained values: if
$\gamma_j<\tau$, then $\gamma_j\le\tau'$ for the largest
$\tau'\in\Gamma$ below $\tau$, and $2|S_{\tau'}|<|I|$ by minimality.
Hence $2\,|\{j\in I:\gamma_j\ge\tau\}|\ge|I|$, and every such $j$ has
$|\{v\in\Gamma:v\le\gamma_j\}|\ge q$. Therefore
$\sum_{j\in I}|\{v\in\Gamma:v\le\gamma_j\}|\ge q|I|/2$.

For each $v\in\Gamma$ fix an index $\ell_v\in I$ with
$\gamma_{\ell_v}=v$. For $k>\lfloor n/2\rfloor$, $j\in I$, and
$v\in\Gamma$ with $v\le\gamma_j$, the number
\[
 y_k+x_j+v
 =y_k+z_j-y_j+z_{\ell_v+1}-y_{\ell_v+1}-z_{\ell_v}+y_{\ell_v}
\]
belongs to $2Y+2Z-2Y-Z$ and to
$(y_k+x_j,y_k+x_{j+1}]$.
For fixed $k$, these half-open intervals are disjoint as $j$ varies,
and within one interval distinct $v$ give distinct numbers; so each $k$
contributes at least $q|I|/2$ values. They lie strictly inside
$(y_k,z_k)$ because $j+1<k$ and $x_{j+1}<x_k$. Different $k$ therefore
give disjoint contributions. There are at least $n/2$ choices of $k$,
proving the lemma.
The counting uses all of $I$; $I'$ is retained for the next application.
\end{proof}

\begin{lemma}[The quadratic surface]\label{lem:quadric}
Fix $0<a<b<c<d$ and put
$\kappa=(d-a)(c-b)/((b-a)(d-c))>0$.
For $0<u<v$ define
\begin{align*}
 X&=\frac{2+(a+b)(u+v)+2abuv}{(b-a)(v-u)},\\
 Y&=\frac{2+(b+c)(u+v)+2bcuv}{(c-b)(v-u)},\\
 Z&=\frac{2+(c+d)(u+v)+2cduv}{(d-c)(v-u)}.
\end{align*}
These triples lie on the irreducible nondegenerate surface
\begin{equation}\label{eq:quadric}
 Q(X,Y,Z)=XZ+XY+YZ-\kappa Y^2+\kappa+1=0,
\end{equation}
equivalently $(X+Y)(Y+Z)=(\kappa+1)(Y^2-1)$, and the pair $(X,Y)$
already determines $(u,v)$.
\end{lemma}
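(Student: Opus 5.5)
The plan is to make the geometry behind \eqref{eq:quadric} explicit rather than expand the rational functions blindly. For $\alpha\in\{a,b,c,d\}$ put
\[
 w_\alpha=\frac{1+\alpha v}{1+\alpha u}.
\]
For $\alpha<\beta$ the numerator and denominator of $X$ (and likewise of $Y$ and $Z$) factor as
\[
 2+(\alpha+\beta)(u+v)+2\alpha\beta uv=(1+\alpha u)(1+\beta v)+(1+\beta u)(1+\alpha v),
\]
\[
 (\beta-\alpha)(v-u)=(1+\alpha u)(1+\beta v)-(1+\beta u)(1+\alpha v).
\]
Both identities are immediate to expand. Hence, with $\rho_{\alpha\beta}=w_\beta/w_\alpha$, each coordinate is the Cayley-type transform
\[
 \frac{\rho_{\alpha\beta}+1}{\rho_{\alpha\beta}-1}
\]
of a ratio, applied to the pairs $(a,b)$, $(b,c)$ and $(c,d)$. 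The map $\alpha\mapsto w_\alpha$ is a Möbius transformation in $\alpha$, so the cross ratio of $(w_a,w_b,w_c,w_d)$ equals that of $(a,b,c,d)$. This cross ratio is a fixed constant built from $\kappa$, and it is independent of $(u,v)$.

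Writing the cross ratio in terms of the three consecutive ratios $\rho_{ab},\rho_{bc},\rho_{cd}$ and substituting $\rho=(X+1)/(X-1)$, $(Y+1)/(Y-1)$, $(Z+1)/(Z-1)$ should yield exactly \eqref{eq:quadric} after clearing denominators. I would check the final polynomial identity directly, since it is a finite computation (and the paper says it is Lean-verified). The equivalent factored form $(X+Y)(Y+Z)=(\kappa+1)(Y^2-1)$ is just expansion.

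Next come the nondegeneracy properties. The partial derivatives are $Y+Z$, $X+Z-2\kappa Y$ and $X+Y$, none identically zero. For irreducibility over $\mathbb C$, a quadric is reducible exactly when its symmetric $4\times4$ matrix (homogenizing by $W$) has rank at most $2$. That matrix is block diagonal, with the $3\times3$ block of $XY+XZ+YZ-\kappa Y^2$ and the entry $\kappa+1\ne0$. The $3\times3$ block has nonzero determinant; I expect it to be a nonzero multiple of $\kappa+1$, to be checked. So the rank is $4$.

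For the ``not locally additive'' condition, solve $Z=(\kappa+1)(Y^2-1)/(X+Y)-Y$ and apply the Elekes--Rónyai criterion. Writing $Z=f(X,Y)$, local additivity is equivalent to
\[
 \partial_X\partial_Y\log\bigl(f_X/f_Y\bigr)\equiv0 .
\]
Here
\[
 f_X=-\frac{(\kappa+1)(Y^2-1)}{(X+Y)^2},\qquad
 f_Y=\frac{2(\kappa+1)Y(X+Y)-(\kappa+1)(Y^2-1)}{(X+Y)^2}-1,
\]
so the ratio $f_X/f_Y$ is explicit and the mixed log-derivative can be checked to be nonzero at a single point. \textbf{This is the step I expect to be the main obstacle.} The cross-ratio surface is a classical candidate for special form, and one must confirm that the dependence on $\kappa$ never makes it vanish identically for $\kappa>0$. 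If the direct computation is messy, I would instead evaluate it at a convenient point such as $X=0$, $Y=2$.

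Finally, $(X,Y)$ determines $\rho_{ab}$ and $\rho_{bc}$, since the transform $\rho\mapsto(\rho+1)/(\rho-1)$ is invertible. This gives two equations
\[
 (1+au)(1+bv)=\rho_{ab}(1+bu)(1+av),\qquad
 (1+bu)(1+cv)=\rho_{bc}(1+cu)(1+bv),
\]
each bilinear in $(u,v)$. Each equation defines a Möbius correspondence $v=\phi(u)$ and $v=\psi(u)$. Their intersection is given by a quadratic in $u$, and one root is always the degenerate $u=v$, which comes from the common point of both correspondences. Dividing out this root should leave a linear equation, so the constraint $u<v$ pins down a unique solution. I would verify this by direct elimination, checking that the residual factor is linear with nonvanishing leading coefficient because $a<b<c$.
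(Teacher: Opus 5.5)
Your proposal is correct, and the computations you deferred all go through. Apart from the nondegeneracy step, which is the same Elekes--R\'onyai mixed-derivative test the paper uses, your route differs from the paper's.

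\textbf{Surface identity.} The paper factors $W\mp1$ as in \eqref{eq:master}. It then adds $X+1$ to $Y-1$ and $Y+1$ to $Z-1$ to get product formulas for $X+Y$ and $Y+Z$, and reads off $(X+Y)(Y+Z)=(\kappa+1)(Y^2-1)$ directly. You instead view each coordinate as the Cayley transform of $w_\beta/w_\alpha$ for the M\"obius map $\alpha\mapsto(1+\alpha v)/(1+\alpha u)$, and use cross-ratio invariance. Normalizing $w_a=1$ gives $\kappa=(\rho_1\rho_2\rho_3-1)(\rho_2-1)/\bigl((\rho_1-1)\rho_2(\rho_3-1)\bigr)$. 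Since $\rho_1\rho_2\rho_3-1=2(XY+YZ+ZX+1)/\bigl((X-1)(Y-1)(Z-1)\bigr)$ and $\rho_1-1=2/(X-1)$ (similarly for $Y,Z$), this collapses to $\kappa(Y^2-1)=XY+YZ+ZX+1$, which is exactly \eqref{eq:quadric}. Your version explains why $(u,v)$ drops out and identifies $\kappa$ as the cross ratio of $(a,b,c,d)$. The paper's factorizations are more economical overall, because it reuses them for injectivity and for the explicit $g$.

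\textbf{Irreducibility.} Your rank test works: the $3\times3$ block has determinant $(\kappa+1)/4$, so the full determinant is $(\kappa+1)^2/4\neq0$. The paper's argument (linear in $Z$ with coprime coefficients, then Gauss's lemma) is equally short and is not special to quadrics.

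\textbf{Two details to correct.}
\begin{enumerate}
\item The mixed derivative is $-2\kappa(X^2+2XY-\kappa Y^2+\kappa+1)/H^2$ with $H=\kappa Y^2+2\kappa XY-X^2+\kappa+1$. At your suggested point $(0,2)$ it equals $-2\kappa(1-3\kappa)/(5\kappa+1)^2$, which vanishes for the admissible value $\kappa=1/3$. Use instead a point such as $(X,Y)=(1,0)$, where it equals $-2(\kappa+2)/\kappa$.
\item In the uniqueness step, a diagonal point lies on the first curve only if $u\in\{-1/a,-1/b\}$, and on the second only if $u\in\{-1/b,-1/c\}$. So the spurious common root is specifically $u=-1/b$ (with $v=-1/b$), and it is excluded by $u>0$, not by $u<v$. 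The remaining solution is unique because two distinct irreducible $(1,1)$-curves meet in two points, counted with multiplicity, on $\mathbb P^1\times\mathbb P^1$.
\end{enumerate}

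A cleaner finish in your own language: $\rho_{ab}$ and $\rho_{bc}$ fix $(w_a,w_b,w_c)$ up to a common scalar, and the normalization $w(0)=1$ fixes that scalar. A M\"obius map is determined by three values, so its pole $-1/u$ and zero $-1/v$ are determined. The paper gets this more directly: $(Y\pm1)/(X+Y)$ isolates $(1+cv)/(1+bv)$ and $(1+cu)/(1+bu)$, each injective in a single variable.
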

\begin{proof}
For real parameters $\alpha<\beta$ let $W(\alpha,\beta)$ denote the
displayed fraction with numerator $2+(\alpha+\beta)(u+v)+2\alpha\beta uv$
and denominator $(\beta-\alpha)(v-u)$, so that $X=W(a,b)$, $Y=W(b,c)$,
$Z=W(c,d)$. Since $(\alpha+\beta)(u+v)\mp(\beta-\alpha)(v-u)$ equals
$2(\alpha v+\beta u)$ or $2(\alpha u+\beta v)$, the numerators of
$W\mp1$ factor:
\begin{equation}\label{eq:master}
 W(\alpha,\beta)-1=\frac{2(1+\alpha v)(1+\beta u)}{(\beta-\alpha)(v-u)},
 \qquad
 W(\alpha,\beta)+1=\frac{2(1+\alpha u)(1+\beta v)}{(\beta-\alpha)(v-u)}.
\end{equation}
All displayed factors are positive; in particular $X,Y,Z>1$.
Adding $X+1$ to $Y-1$ and $Y+1$ to $Z-1$, and using
$(c-b)(1+au)+(b-a)(1+cu)=(c-a)(1+bu)$ together with its analogue for
$(b,c,d)$, gives
\[
 X+Y=\frac{2(c-a)(1+bu)(1+bv)}{(b-a)(c-b)(v-u)},
 \qquad
 Y+Z=\frac{2(d-b)(1+cu)(1+cv)}{(c-b)(d-c)(v-u)},
\]
while multiplying the two identities \eqref{eq:master} for $Y$ gives
$Y^2-1=4(1+bu)(1+bv)(1+cu)(1+cv)/((c-b)^2(v-u)^2)$.
Since $(d-a)(c-b)+(b-a)(d-c)=(c-a)(d-b)$, that is,
$\kappa+1=(c-a)(d-b)/((b-a)(d-c))$, the three displays combine to
$(X+Y)(Y+Z)=(\kappa+1)(Y^2-1)$, which expands to \eqref{eq:quadric}.

Dividing the identities \eqref{eq:master} for $Y$ by the formula for
$X+Y$ cancels the common factors:
$(Y+1)/(X+Y)=(b-a)(1+cv)/((c-a)(1+bv))$ and
$(Y-1)/(X+Y)=(b-a)(1+cu)/((c-a)(1+bu))$.
A value of $(1+cv)/(1+bv)$ determines $v$, since cross-multiplying two
solutions gives $(c-b)(v-v')=0$; likewise for $u$. Hence $(X,Y)$
determines $(u,v)$.

As a polynomial in $Z$, $Q$ has coefficient $X+Y$ and constant
term $XY-\kappa Y^2+\kappa+1$. These are coprime, since setting
$X=-Y$ in the latter gives $(\kappa+1)(1-Y^2)$.
Gauss's lemma and linearity in $Z$ prove irreducibility.
The same argument holds over $\mathbb C$.
All three partial derivatives are nonzero polynomials.

On $X+Y\ne0$, the product form solves for $Z$:
\[
 Z=g(X,Y)=-Y+\frac{(\kappa+1)(Y^2-1)}{X+Y},
 \qquad
 H=\kappa Y^2+2\kappa XY-X^2+\kappa+1.
\]
On the open set where the indicated derivatives are nonzero,
direct differentiation gives
\begin{equation}\label{eq:mixedderivative}
 \frac{\partial^2}{\partial X\partial Y}
       \log\left|\frac{g_X}{g_Y}\right|
 =
 -\frac{2\kappa(X^2+2XY-\kappa Y^2+\kappa+1)}{H^2}.
\end{equation}
This rational function cannot vanish identically on a nonempty
open set, since $\kappa>0$.
An additive representation
$g=\psi(\phi_1(X)+\phi_2(Y))$, with real or complex analytic separate
coordinate changes, would make the mixed derivative
identically zero, by the chain rule.
The excluded algebraic curves contain no open surface patch, so
the irreducible surface is nondegenerate as required in
\eqref{eq:SZ}.
The derivative calculation also holds on a complex open set using
a local logarithm, and so excludes the complex analytic alternative
in the stated incidence theorem.
\end{proof}

\begin{proof}[Proof of Theorem \ref{thm:expander}]
Small sets are absorbed by the absolute constant.
For a large $S$, retain a constant fraction of its nonzero elements
having the same sign, and reflect that subset if necessary.
Products of two retained elements are unchanged by this reflection.
It suffices to prove the assertion for a positive set of size $n$;
the resulting quotient set is contained in the original one.
Let $A=\log S=\{\alpha_1<\cdots<\alpha_n\}$ and
$f(t)=\log(1+e^t)$.
Choose $i$ minimizing $\alpha_{i+3}-\alpha_i$ and put
$m=\lfloor n/3\rfloor$.
The intervals
$(\alpha_i+\alpha_{3j},\alpha_{i+3}+\alpha_{3j})$ are disjoint:
consecutive starting points are separated by at least their length.
For each $r=0,1,2$, define
\[
 Y_r=\{f(\alpha_{i+r}+\alpha_{3j}):1\le j\le m\},
 \quad
 Z_r=\{f(\alpha_{i+r+1}+\alpha_{3j}):1\le j\le m\}.
\]
The paired endpoints strictly interleave. Their gaps
\[
 x_{r,j}
 =f(\alpha_{i+r+1}+\alpha_{3j})
       -f(\alpha_{i+r}+\alpha_{3j})
\]
are strictly increasing in $j$, because $f$ is strictly convex.

Start with $I_0=\{1,\ldots,\lfloor m/2\rfloor-1\}$ and apply
Lemma \ref{lem:median} successively for $r=0,1,2$.
This gives nested subsets $I_3\subseteq I_2\subseteq I_1\subseteq I_0$
with $|I_3|\gg n$.
Define
$\Gamma_r=\{x_{r,j+1}-x_{r,j}:j\in I_3\}$.
Since restricting indices can only shrink these gap sets, each
application gives
\begin{equation}\label{eq:squeeze-three}
 |2Y_r+2Z_r-2Y_r-Z_r|\gg n^2|\Gamma_r|.
\end{equation}

Set
$a=e^{\alpha_i}$, $b=e^{\alpha_{i+1}}$,
$c=e^{\alpha_{i+2}}$, $d=e^{\alpha_{i+3}}$.
For each $j\in I_3$, put $u=e^{\alpha_{3j}}$,
$v=e^{\alpha_{3(j+1)}}$. If
$\gamma=x_{0,j+1}-x_{0,j}$, then
\[
 e^\gamma=\frac{(1+bv)(1+au)}{(1+av)(1+bu)}>1,
 \qquad
 \frac{e^\gamma+1}{e^\gamma-1}
 =\frac{2+(a+b)(u+v)+2abuv}{(b-a)(v-u)}.
\]
The second identity is \eqref{eq:master} for the pair $(a,b)$, since
$w\mapsto(w+1)/(w-1)$ is an involution.
The corresponding formulas for $r=1,2$ give $Y,Z$ in
Lemma \ref{lem:quadric}.
Thus the separate maps
$\gamma\mapsto(e^\gamma+1)/(e^\gamma-1)$ put at least $|I_3|$
distinct triples on $Q=0$ in a Cartesian product whose coordinate
sizes are $|\Gamma_0|,|\Gamma_1|,|\Gamma_2|$.
Injectivity here follows from Lemma \ref{lem:quadric}, rather than
from an assumed absence of repeated gaps.
Equation \eqref{eq:SZ} yields
\[
 n\ll |I_3|
 \ll \max_r|\Gamma_r|^{12/7},
 \qquad
 \max_r|\Gamma_r|\gg n^{7/12}.
\]
Choose $r$ with largest gap set and use \eqref{eq:squeeze-three}.
The additive word there is contained in
$2f(\alpha_{i+r}+A)+2f(\alpha_{i+r+1}+A)
-2f(\alpha_{i+r}+A)-f(\alpha_{i+r+1}+A)$.
Exponentiation is injective and turns this into the required
seven-factor quotient set for the two corresponding elements of $S$.
Its cardinality is therefore $\gg n^{31/12}$.
\end{proof}

\subsection*{Use of artificial intelligence}
The initial ideas and the research direction are due to the author.
The majority of the technical work was carried out by large language
model coding agents under the author's direction and review: the main
argument --- the separate-minimizer bridge and its geometric
implementation --- together with its initial audits and machine
certification was developed primarily in sessions of OpenAI Codex,
while the independent line-by-line verification, the simplified proofs
of the present version, most of the accompanying Lean~4 formalization
(the median lemma, the surface identities, and the pinned
construction), the sharpness and pinned additions, the literature
checks, and this manuscript were produced primarily in sessions of
Anthropic Claude Code. All Lean files are kernel-checked against
\texttt{mathlib} and report only the standard axioms; the
exact-arithmetic check scripts are likewise machine-written. The
author takes full responsibility for the correctness and integrity of
this article. All sources, verification files, and their kernel
outputs are available at
\url{https://github.com/Lzp88/dot-product-multibase-improvement}.

\end{document}